\definecolor{rose}{rgb}{1.5,0.0,0.60}
\newtheorem{thm}{Theorem}[section]
\newtheorem{lem}[thm]{Lemma}
\newtheorem{pro}[thm]{Proposition}
\newtheorem{ex}[thm]{Example}
\theoremstyle{definition}
\newtheorem{defi}[thm]{Definition}
\newcommand{\nc}{\newcommand}
\newcommand{\delete}[1]{}
	\nc{\mlabel}[1]{\label{#1}}  
	\nc{\mcite}[1]{\cite{#1}}  
	\nc{\mref}[1]{\ref{#1}}  
	\nc{\mbibitem}[1]{\bibitem{#1}} 
	\nc{\mlabel}[1]{\label{#1}{\hfill \hspace{1cm}{\bf{{\ }\hfill(#1)}}}}
	\nc{\mcite}[1]{\cite{#1}{{\em{{\ }(#1)}}}}  
	\nc{\mref}[1]{\ref{#1}{{\em{{\ }(#1)}}}}  
	\nc{\mbibitem}[1]{\bibitem[\em #1]{#1}} 
\newcommand {\emptycomment}[1]{}
\nc{\oprn}{\theta}
\nc{\Oprn}{\Theta}
\nc{\calo}{\mathcal{O}}
\nc{\oop}{$\mathcal{O}$-operator\xspace}
\nc{\oops}{$\mathcal{O}$-operators\xspace}
\nc{\mrho}{{\bm{\varrho}}}
\nc{\emk}{\mathbf{K}}
\nc{\invlim}{\displaystyle{\lim_{\longleftarrow}}\,}
\nc{\ot}{\otimes}
\newcommand{\be }{\begin{equation}}
	\newcommand{\ee }{\end{equation}}
\newcommand{\br}[1]{   [ \cdot,    \cdot  ]   }
\nc{\CV}{\mathbf{C}}
\begin{document}

	\title[Cohomologies of modified  Rota-Baxter Lie algebras with derivations and applications]
	{Cohomologies of modified Rota-Baxter Lie algebras with derivations and applications}

	\author{Basdouri Imed, Benabdelhafidh Sami, Sadraoui Mohamed Amin}
	\address{University of Gafsa, Faculty of Sciences Gafsa, 2112 Gafsa, Tunisia.}
	\email{\bf basdourimed@yahoo.fr}
	
	\address{University of Sfax, Faculty of Sciences of Sfax, BP 1171, 3038 Sfax, Tunisia.}
	\email{\bf abdelhafidhsami41@gmail.com}
	
	\address{University of Sfax, Faculty of Sciences of Sfax, BP 1171, 3038 Sfax, Tunisia.}
	\email{\bf aminsadrawi@gmail.com}
	
	\begin{abstract}
		In this paper, first, we introduce a notion of modified Rota-Baxter Lie algebras of weight $\mathrm{\lambda}$ with
		derivations (or simply modified Rota-Baxter LieDer pairs) and their representations. Moreover, we investigate cohomologies
		of a modified Rota-Baxter LieDer pairs with coefficients in a suitable representation. As applications,
		we study formal one-parameter deformations and abelian extensions of modified Rota-Baxter LieDer pairs.
	\end{abstract}
	
	
	\keywords{Lie algebras, derivation, modified Rota-Baxter operator, cohomology, deformation, extension.}
	
	\maketitle
	
	\vspace{-1.1cm}
	
	\tableofcontents
	
	\allowdisplaybreaks
	
	\section{Introduction}

	In 1960, Baxter introduced the notion of Rota-Baxter operators on associative algebras in his study of fluctuation theory in probability \cite{G0}. Rota-Baxter operators have found many applications in mathematics and physics, such as renormalizations in perturbative quantum field theory \cite{A0}, combinatorics \cite{N0}, Yang Baxter equations \cite{C0}, multiple zeta values in number theory \cite{L0}, and algebraic operad \cite{M0}. Das, Hazra, and Mishra considered Rota-Baxter Lie algebras in \cite{S2}. Cohomologies, deformations, and extensions of Rota-Baxter Leibniz algebras were established in \cite{B2}. Jiang and Sheng constructed cohomologies of relative Rota-Baxter Lie algebras with coefficients in an arbitrary representation \cite{J1}. The Rota-Baxter operator of arbitrary weights on Lie algebras was studied in \cite{A3,K0}. Cohomologies theory of Rota-Baxter pre-Lie algebras of arbitrary weights was studied in \cite{S1}. Rota-Baxter Lie triple systems of any weights were established in \cite{S0}.
	
	In \cite{M2}, Semenov-Tian-Shansky solved the solution of the modified classical Yang-Baxter equation, which was called the modified $r$-matrix in \cite{Jun}. Inspired by the case of the modified $r$-matrix, this modified algebraic structure has been extended to other algebraic structures, such as modified Rota-Baxter associative algebras of weight $\lambda$ \cite{A1}, modified $\lambda$-differential Lie algebras in \cite{X0}, modified Rota-Baxter Leibniz algebras of weight $\lambda$ \cite{Y0,B1}, modified $\lambda$-differential Lie triple systems \cite{W1}, and modified Rota-Baxter Lie-Yamaguti algebras in \cite{W0}.
	
	Derivations are also useful in constructing homotopy Lie algebras \cite{T0}, deformation formulas \cite{V2}, and differential Galois theory \cite{A5}. They also play an important role in control theory and gauge theories in quantum field theory \cite{V0,V1}. In \cite{M1,J2}, the authors considered algebras with derivations from the operadic point of view. Recently, Lie algebras with a derivation (called LieDer pairs) are studied from a cohomological point of view \cite{R0}, and deformations, extensions of LieDer pairs are studied. The results of \cite{R0} have been extended to associative algebras with derivations (called AssDer pairs) in \cite{A2}, Leibniz algebras with derivations are established in \cite{A4}, and cohomologies and deformations of Lie triple systems with derivations are studied in \cite{Q0}. Recently, cohomologies of relative Rota-Baxter Lie algebras with derivations and applications are considered in \cite{Q1}. Also derivations play an important role in the construction of the InvDer algebraic structures in \cite{Bas}. 
	
	Motivated by these works, we are devoted to developing cohomologies of modified Rota-Baxter LieDer pairs and applying them to the formal deformation and abelian extension of modified Rota-Baxter LieDer pairs.
	
	The paper is organized as follows. In section \ref{sec2}, we consider modified Rota-Baxter LieDer pairs and introduce their representations. In section \ref{sec3}, we define the cohomology of modified Rota-Baxter LieDer pairs with coefficients in a suitable representation. In section \ref{sec4}, we study formal deformation theory and rigidity of modified Rota-Baxter LieDer pairs. Finally, in section \ref{sec5}, we discuss an abelian extension of the modified Rota-Baxter LieDer pairs and characterize extensions in terms of our second cohomology groups.
	
	Throughout this paper, let $\mathbb{K}$ be a field of characteristic $0$. Except specially stated, vector spaces are $\mathbb{K}$-vector spaces, and all tensor products are taken over $\mathbb{K}$.
	
	
	\section{Modified Rota-Baxter LieDer pair}\label{sec2}
	\def\theequation{\arabic{section}.\arabic{equation}}
	\setcounter{equation} {0}
	In this section, we consider modified Rota-Baxter LieDer pairs and introduce their representations. We also provide various examples and new construction.
	\\
	
	A {\bf modified Rota-Baxter Lie algebra} of weight $\lambda$ consists of a {\bf Lie algebra} $\mathcal{A}=\mathrm{(A,[-,-])}$ equipped
	with a {\bf modified Rota-Baxter operator} of weight $\lambda$ denoted by $\mathrm{R}$ such that
	\begin{equation}\label{modified RBO}
		\mathrm{[Ra,Rb]=R([Ra,b]+[a,Rb])+\lambda [a,b],\quad \forall a,b\in A} .
	\end{equation}
	
	Inspired by the definition of modified Rota-Baxter Lie algebra and the notion of {\bf LieDer pair} \cite{R0} we
	introduce the following.
	\begin{defi}
		A \textbf{modified Rota-Baxter LieDer pair of weight $\lambda$} consists of a modified Rota-Baxter Lie algebra  $(\mathcal{A},\mathrm{R}
		)$ equipped with a
		\textbf{derivation} $\mathrm{d:A\rightarrow A}$  such that
		\begin{equation}\label{condition1 MRBLieDer pair}
			\mathrm{R\circ d=d\circ R}.
		\end{equation}
		Denote it by $(\mathcal{A},\mathrm{R,d})$.
	\end{defi}
	\begin{ex}
		Let $\{\mathrm{e_1,e_2}\}$ be a basis of a $2$-dimensional vector space $\mathrm{A}$ over $\mathbb{R}$. Given a Lie structure
		$\mathrm{[e_1,e_2]=e_2}$, then the triple $(\mathcal{A},\mathrm{R,d})$ is a modified Rota-Baxter LieDer pair of weight $(-\lambda)$ with
		\begin{align*}
			\mathrm{d=\begin{pmatrix}
					0 & 0 \\
					0 & a_{22}
				\end{pmatrix}\quad \text{ and   } \quad	R=\begin{pmatrix}
					a_{11} & 0 \\
					0 & \sqrt{\lambda}
			\end{pmatrix}},\quad \text{ where } (\lambda>0).
		\end{align*}
	\end{ex}
	Note that a modified Rota-Baxter LieDer pair $(\mathcal{A},\mathrm{R,d})$ of weight $0$
	is just a Rota-Baxter LieDer pair.
	In the sequel we denote modified Rota-Baxter LieDer pair instead of modified Rota-Baxter LieDer pair of weight $\lambda$ if there is no confusion.
	\begin{defi}
		A \textbf{morphism} of modified Rota-Baxter LieDer pairs from $(\mathcal{A}_1,\mathrm{R_1,d_1})$ to $(\mathcal{A}_2,\mathrm{R_2,d_2})$ is a Lie
		algebra morphism $\varphi:\mathcal{A}_1\rightarrow \mathcal{A}_2$ such that the following identities holds
		\begin{eqnarray}
			\varphi\circ \mathrm{d_1}&=&\mathrm{d_2}\circ\varphi,\label{morphism RBO2}\\
			\varphi\circ \mathrm{R_1}&=&\mathrm{R_2}\circ\varphi.\label{morphism RBO3}
		\end{eqnarray}
	\end{defi}

	\cite{A3} Recall that a linear map $\mathrm{T : A\rightarrow A}$ is called a Rota-Baxter operator of weight $\lambda$ on the
	Lie algebra $\mathcal{A}$ if $\mathrm{T}$ satisfies
	\begin{align*}
		\mathrm{[T(a),T(b)]}&=\mathrm{ T([T(a),b] + [a,T(b)] + \lambda[a,b]}),\quad \forall \mathrm{a,b \in A}.
	\end{align*}
	\cite{B0} A \textbf{Rota-Baxter LieDer pair} of weight $\lambda$ is a triple $(\mathcal{A},\mathrm{T,d})$
	consisting of a Lie algebra $\mathcal{A}$ together with a derivation $\mathrm{d}$ and a Rota-Baxter operator $\mathrm{T}$ of
	weight $\lambda$ on it such that
	\begin{equation}\label{RB LieDer pair1}
		\mathrm{T \circ d=d\circ T}
	\end{equation}
	Furthermore, we have
	
	\begin{lem}\label{lem1}
		The triple $(\mathcal{A},\mathrm{T,d})$ is a Rota-Baxter LieDer pair if and only if $(\mathcal{A},\mathrm{2T+\lambda Id_A,d})$ is a modified
		Rota-Baxter LieDer pair of weight $-\lambda^2$.
	\end{lem}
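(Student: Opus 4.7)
The plan is to verify the two defining conditions of a modified Rota-Baxter LieDer pair of weight $-\lambda^2$ for $R := 2T + \lambda\,\Id_A$, and show each is equivalent to the corresponding condition for $(\mathcal{A}, T, d)$ as a Rota-Baxter LieDer pair of weight $\lambda$. Because $R$ is \emph{affine} in $T$, every identity will reduce to a direct bilinear expansion, and no clever trick is needed; the only thing to keep track of is bookkeeping with the coefficients of $[a,b]$, $[Ta,b]$, $[a,Tb]$ and $[Ta,Tb]$.

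First I would handle the derivation compatibility \eqref{condition1 MRBLieDer pair}. Expanding,
\[
R\circ d \;=\; (2T+\lambda\,\Id_A)\circ d \;=\; 2\,T\circ d + \lambda\,d, \qquad d\circ R \;=\; 2\,d\circ T + \lambda\,d,
\]
so $R\circ d = d\circ R$ iff $T\circ d = d\circ T$ (using $\mathrm{char}\,\mathbb{K}=0$ to divide by $2$). This handles \eqref{RB LieDer pair1}.

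Next I would address the operator identity. Using bilinearity of the bracket,
\[
[Ra, Rb] \;=\; 4[Ta, Tb] + 2\lambda\bigl([Ta,b]+[a,Tb]\bigr) + \lambda^2[a,b],
\]
and similarly, expanding $R([Ra,b]+[a,Rb]) - \lambda^2[a,b]$ and collecting, one obtains
\[
R\bigl([Ra,b]+[a,Rb]\bigr) - \lambda^2[a,b]
\;=\; 4\,T\!\bigl([Ta,b]+[a,Tb]+\lambda[a,b]\bigr) + 2\lambda\bigl([Ta,b]+[a,Tb]\bigr) + \lambda^2[a,b].
\]
Comparing the two displays, the modified identity \eqref{modified RBO} of weight $-\lambda^2$ for $R$ becomes
\[
4[Ta,Tb] \;=\; 4\,T\!\bigl([Ta,b]+[a,Tb]+\lambda[a,b]\bigr),
\]
which, again after dividing by $4$, is exactly the Rota-Baxter identity of weight $\lambda$ for $T$.

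Combining both equivalences gives the statement in both directions. I expect no real obstacle; the main thing is to organise the expansion of $R([Ra,b]+[a,Rb])$ carefully so that the $\lambda^2[a,b]$ correction term on the modified side precisely cancels the extra $\lambda^2[a,b]$ produced by the affine shift, leaving the pure Rota-Baxter relation of weight $\lambda$.
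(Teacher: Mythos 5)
Your proof is correct and follows essentially the same direct bilinear expansion as the paper's own argument. In fact, by organising each step as an equivalence (dividing by $2$ and by $4$, legitimate in characteristic $0$), you actually obtain both directions of the ``if and only if,'' whereas the paper only writes out the forward implication explicitly.
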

	\begin{proof}
		For $\mathrm{a,b\in A}$, we have
		\begin{align*}
			[(&\mathrm{2T+\lambda Id_A)(a),(2T+\lambda Id_A)(b)}]\\
			=& \mathrm{[2T(a)+ \lambda a, 2T(b)+ \lambda b]} \\
			=& \mathrm{4[T(a),T(b)] + 2\lambda[T(a),b] + 2\lambda[a,T(b)] + \lambda^2[a,b]}\\
			=& \mathrm{4T\big([T(a),b] + [a,T(b)] + \lambda[a,b]\big)+2\lambda[T(a),b] + 2\lambda[a,T(b)] + \lambda^2[a,b]} \\
			=& \mathrm{(2T + \lambda Id_A)\big([(2T + \lambda Id_A)(a),b] + [a, (2T +\lambda Id_A)(b)]\big) - \lambda^2[a,b],}
		\end{align*}
		and
		\begin{align*}
			\mathrm{(2T+\lambda Id_A)\circ d(a)}&=\mathrm{2T\circ d(a)+\lambda d(a)}\\
			&\overset{\eqref{RB LieDer pair1}}{=}\mathrm{2d\circ T(a)+\lambda Id_A \circ d(a)}\\
			&=\mathrm{d\circ (2T+\lambda Id_A)(a)}.
		\end{align*}
		Thus, $(\mathcal{A},\mathrm{ 2T + \lambda Id_A,d})$ is a modified Rota-Baxter LieDer pair of
		weight $-\lambda^2$.
	\end{proof}
	Let $(\mathcal{A},\mathrm{d})$ be a LieDer pair. Recall that, from \cite{R0}, a representation of it is a vector
	space $\mathrm{V}$
	with two linear maps $\rho:\mathrm{A}\rightarrow \mathrm{gl(V)}$ and
	$\mathrm{d_\mathrm{V}:V\rightarrow V}$ such that, for all $\mathrm{a,b\in A}$
	\begin{eqnarray*}
		\mathrm{\rho([a,b])}&=&\mathrm{\rho(a)\circ \rho(b)-\rho(b)\circ \rho(a)},\\
		\mathrm{d_\mathrm{V}\circ \rho(a)}&=&\mathrm{\rho(d a)+\rho(a)\circ d_\mathrm{V}}.
	\end{eqnarray*}
	\begin{defi}
		Let $(\mathcal{A},\mathrm{R,d})$ be a modified Rota-Baxter LieDer pair. A \textbf{representation} of it is a triple
		$(\mathcal{V}=\mathrm{(V;\rho),R_V,d_\mathrm{V}})$ where $\mathcal{V}$ is a representation of the Lie algebra $\mathcal{A}$,
		$\mathrm{R_V:V\rightarrow V}$ and $\mathrm{d_V:V\rightarrow V}$ are a linear maps such that for all
		$\mathrm{a,b\in A}$ and $\mathrm{u\in V}$
		\begin{eqnarray}
			\mathrm{\rho(Ra)(R_Vu)}&=&\mathrm{R_V(\rho(Ra)(u)+\rho(a)(R_Vu))+\lambda \rho(a)u},\label{Rep of RBDer pair2}\\
			\mathrm{d_V (\rho(a)(u))}&=&\mathrm{\rho(d a)(u)+\rho(a) (d_V(u))},\label{Rep of RBDer pair1}\\
			\mathrm{R_V( d_V(u))}&=&\mathrm{d_V (R_V(u))}. \label{Rep of RBDer pair3}
		\end{eqnarray}
	\end{defi}
	\begin{ex}
		Any modified Rota-Baxter LieDer pair $(\mathcal{A},\mathrm{R,d})$ is a representation of itself. Such a representation is
		called \textbf{the adjoint representation}.
	\end{ex}
	\begin{ex}
		Let $(\mathcal{A},\mathrm{R,d})$ be a modified Rota-Baxter LieDer pair and $(\mathcal{V},\mathrm{R_V,d}_\mathrm{V})$ be a
		representation of it. Then for any scalar $\kappa\in \mathbb{K}$, the triple $\mathrm{(\mathcal{V},\kappa R_V,d_V)}$ is a representation of the $(\kappa\lambda)$-modified Rota-Baxter LieDer pair $(\mathcal{A},\mathrm{\kappa R,d_\mathrm{V}})$.
	\end{ex}
	\begin{ex}
		Let $(\mathrm{\mathcal{A},R,d})$ be a modified Rota-Baxter LieDer pair and $\mathrm{(\mathcal{V},R_V,d_V)}$ be a
		representation of it. Then the triple
		$\mathrm{(\mathcal{V},-\lambda \mathrm{Id}_\mathrm{V}-R_V,d_\mathrm{V})}$ is a representation of the
		modified Rota-Baxter LieDer pair $\mathrm{(\mathcal{A},-\lambda\mathrm{Id}_\mathrm{A}-R,d)}$.
	\end{ex}

	Next, inspired by the reference \cite{R0} we construct the  \textbf{semi-direct product} in the context of modified Rota-Baxter
	LieDer pair.
	\begin{pro}
		Let $(\mathcal{A},\mathrm{R,d})$ be a modified Rota-Baxter LieDer pair and  $(\mathcal{V},\mathrm{R_V,d}_\mathrm{V})$ be a
		representation of it. Then $(\mathcal{A}\oplus \mathcal{V},\mathrm{R\oplus R_V,d\oplus d}_{\mathrm{V}})$ is a modified Rota-Baxter LieDer pair
		where the Lie bracket on $\mathcal{A}\oplus \mathcal{V}$ is given by
		\begin{equation*}
			\mathrm{[a+u,b+v]_\ltimes:=[a,b]+\rho(a)v-\rho(b)u,\quad \forall a,b\in A \quad \forall u,v\in V},
		\end{equation*}
		and the modified Rota-Baxter operator is given by
		\begin{eqnarray*}
			\mathrm{(R\oplus R_V)(a+u)=R a+R_Vu},
		\end{eqnarray*}
		and the derivation is given by
		\begin{eqnarray*}
			\mathrm{(	d\oplus d_\mathrm{V})(a+u)=d a+d_\mathrm{V}u}
		\end{eqnarray*}
		
		We call such structure by the semi-direct product of the modified Rota-Baxter LieDer pair $(\mathcal{A},\mathrm{R,d})$
		by a representation of it $(\mathcal{V},\mathrm{R_V,d}_\mathrm{V})$ and denoted by $\mathcal{A}\ltimes_{\mathrm{mRBLD^\lambda}}\mathcal{V}$.
	\end{pro}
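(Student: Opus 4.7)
The plan is to verify the four defining properties of a modified Rota-Baxter LieDer pair of weight $\lambda$ for the triple $(\mathcal{A}\oplus\mathcal{V},\mathrm{R\oplus R_V, d\oplus d_V})$: (i) $[-,-]_\ltimes$ is a Lie bracket on $\mathrm{A\oplus V}$; (ii) $\mathrm{R\oplus R_V}$ satisfies \eqref{modified RBO} with respect to $[-,-]_\ltimes$; (iii) $\mathrm{d\oplus d_V}$ is a derivation for $[-,-]_\ltimes$; and (iv) $(\mathrm{R\oplus R_V})\circ(\mathrm{d\oplus d_V}) = (\mathrm{d\oplus d_V})\circ(\mathrm{R\oplus R_V})$. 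In each case the strategy is to evaluate both sides on generic elements $\mathrm{a+u},\mathrm{b+v}\in\mathrm{A\oplus V}$, split the result into its $\mathrm{A}$- and $\mathrm{V}$-components, and invoke the corresponding axiom for either $(\mathcal{A},\mathrm{R,d})$ or the representation $(\mathcal{V},\mathrm{R_V,d_V})$.

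For (i) I would follow the classical semi-direct product argument: skew-symmetry is immediate from the defining formula, and the Jacobi identity for $[-,-]_\ltimes$ reduces, after separating components of the cyclic sum, to the Jacobi identity of $[-,-]$ on $\mathrm{A}$ together with the representation axiom $\mathrm{\rho([a,b])=\rho(a)\circ\rho(b)-\rho(b)\circ\rho(a)}$. For (ii) the $\mathrm{A}$-component is literally \eqref{modified RBO} for $(\mathcal{A},\mathrm{R})$, while the $\mathrm{V}$-component asks that $\mathrm{\rho(Ra)(R_V v)-\rho(Rb)(R_V u)}$ equals $\mathrm{R_V\bigl(\rho(Ra)v - \rho(Rb)u + \rho(a)(R_V v) - \rho(b)(R_V u)\bigr) + \lambda\bigl(\rho(a)v-\rho(b)u\bigr)}$; this follows by applying \eqref{Rep of RBDer pair2} once to $(\mathrm{a},v)$ and once to $(\mathrm{b},u)$ and subtracting. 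For (iii) the $\mathrm{A}$-component is the derivation property of $\mathrm{d}$ on $\mathcal{A}$, and the $\mathrm{V}$-component is a direct consequence of \eqref{Rep of RBDer pair1} applied to each of the mixed terms $\mathrm{\rho(a)v}$ and $\mathrm{\rho(b)u}$. Finally, (iv) decouples into the two independent relations $\mathrm{R\circ d=d\circ R}$ on $\mathrm{A}$ from \eqref{condition1 MRBLieDer pair} and $\mathrm{R_V\circ d_V=d_V\circ R_V}$ on $\mathrm{V}$ from \eqref{Rep of RBDer pair3}.

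The main difficulty is organisational rather than conceptual: one must track signs carefully when decomposing the mixed $\mathrm{A}$-$\mathrm{V}$ contributions produced by the semi-direct bracket in parts (ii) and (iii), and arrange the resulting terms so that they match the exact shape of the representation axioms \eqref{Rep of RBDer pair2} and \eqref{Rep of RBDer pair1}. Once this decomposition is carried out cleanly, each of the four identities falls out term by term from the axioms of the base modified Rota-Baxter LieDer pair and those of its representation, with no subtle step remaining.
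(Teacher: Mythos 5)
Your proposal is correct and follows essentially the same route as the paper: componentwise verification of the modified Rota-Baxter identity via \eqref{modified RBO} and \eqref{Rep of RBDer pair2}, the derivation property via \eqref{Rep of RBDer pair1}, and the commutation via \eqref{condition1 MRBLieDer pair} and \eqref{Rep of RBDer pair3}, with the Lie structure handled by the classical semi-direct product argument.
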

	\begin{proof}
		For any $\mathrm{a,b\in A}$ and $\mathrm{u,v\in V}$, we have
		\begin{align*}
			\mathrm{[R\oplus R_V(a+u),R\oplus R_V(b+v)]_{\ltimes}}
			=&\mathrm{[Ra,Rb] + \rho(Ra)R_Vv-\rho(Rb)R_Vu}\\
			=&\mathrm{R\big([Ra,b] + [a,Rb]\big)+\lambda[a,b]} \\
			&+\mathrm{ R_V\big(\rho(Ra)v+\rho(a)(R_Vv)\big) +\lambda\rho(a)v} \\
			&- \mathrm{R_V\big(\rho(Rb)u+\rho(b)(R_Vu)\big)-\lambda\rho(b)u}\\
			=&\mathrm{R \oplus R_V \big([R \oplus R_V(a+u),b+v]_{\ltimes}}\\
			&+\mathrm{[a+u, R \oplus R_V(b+v)]_{\ltimes}\big)+\lambda[a+u,b+v]_{\ltimes}},
		\end{align*}
		similarly, we have
		\begin{align*}
			\mathrm{[d\oplus d_V(a+u),d\oplus d_V(b+v)]_{\ltimes}}=&\mathrm{[d\oplus d_V(a+u),b+v]_{\ltimes}} \\
			&+\mathrm{[a+u,d\oplus d_V(b+v)]_{\ltimes}},
		\end{align*}
		and
		\begin{align*}
			\mathrm{(R\oplus R_V)\circ(d\oplus d_V)(a+u)}&=\mathrm{(R\oplus R_V)(da+d_\mathrm{V}u)}\\
			&=\mathrm{R(da)+R_\mathrm{V}(d_\mathrm{V}u)}\\
			&=\mathrm{d(Ra)+d_\mathrm{V}(R_\mathrm{V}u)}\\
			&=\mathrm{(d\oplus d_V)\circ(R\oplus R_V)(a+u)}.
		\end{align*}
		This complete the proof.
	\end{proof}
	Next, we study \textbf{induced modified Rota-Baxter LieDer pair} from a modified Rota-Baxter operator of weight $\lambda$.
	\begin{pro}
		Let $(\mathcal{A},\mathrm{R,d})$ be a modified Rota-Baxter LieDer pair. Define the following bracket
		\begin{equation}\label{induced RBLieDer}
			\mathrm{[a,b]_\mathrm{R}:=[Ra,b]+[a,Rb]},\quad \forall \mathrm{a,b\in A}.
		\end{equation}
		Then we have
		\begin{enumerate}
			\item[1)] $\mathrm{(A,[-,-]_R)}$ is a new Lie algebra, we denote it by $\mathcal{A}_R$,
			\item [2)] the couple $(\mathcal{A}_R,\mathrm{R})$ is a modified Rota-Baxter Lie algebra of weight
			$\mathrm{\lambda}$,
			\item [3)] the triple $\mathrm{(\mathcal{A}_\mathrm{R},R,d)}$ is a modified Rota-Baxter LieDer pair.
		\end{enumerate}
	\end{pro}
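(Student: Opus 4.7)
The plan is to verify the three claims in order, with the bulk of the work concentrated in claim~(1). Throughout, the main tool is the identity
\[
\mathrm{R([Ra,b]+[a,Rb]) = [Ra,Rb] - \lambda[a,b]},
\]
which is just a rewriting of \eqref{modified RBO}. Observe that this identity reads $\mathrm{R[a,b]_R = [Ra,Rb]-\lambda[a,b]}$, which will be the workhorse for all three parts.

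For part (1), skew-symmetry of $[-,-]_R$ is immediate from the skew-symmetry of the underlying bracket. The Jacobi identity is the real content. My plan is to expand
\[
\mathrm{[[a,b]_R,c]_R = [R[a,b]_R,c]+[[a,b]_R,Rc] = [[Ra,Rb],c] - \lambda[[a,b],c] + [[Ra,b],Rc]+[[a,Rb],Rc]}
\]
using the rewritten modified Rota-Baxter identity, then sum cyclically over $(a,b,c)$. The terms $\sum_{\mathrm{cyc}}[[Ra,Rb],c]$ and $\lambda\sum_{\mathrm{cyc}}[[a,b],c]$ vanish by Jacobi on $(Ra,Rb,Rc)$ and on $(a,b,c)$ respectively. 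The remaining six mixed terms group into three pairs of the form $[[Ra,b],Rc]+[[b,Rc],Ra]$, which by Jacobi in $\mathcal{A}$ equal $[[Ra,Rc],b]$; a second application of Jacobi on $(Ra,Rb,Rc)$ then shows the three resulting terms cancel. This is the step I expect to be the main obstacle: keeping the cyclic bookkeeping straight and recognizing which cyclic sums vanish by which instance of Jacobi.

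For part (2), I will verify the modified Rota-Baxter axiom directly in $\mathcal{A}_R$. Using the workhorse identity twice,
\begin{align*}
\mathrm{R\bigl([Ra,b]_R+[a,Rb]_R\bigr)}
&= \mathrm{R[Ra,b]_R + R[a,Rb]_R} \\
&= \mathrm{[R^2a,Rb]-\lambda[Ra,b]+[Ra,R^2b]-\lambda[a,Rb]} \\
&= \mathrm{[Ra,Rb]_R - \lambda[a,b]_R},
\end{align*}
which rearranges to the weight-$\lambda$ modified Rota-Baxter identity for $R$ with respect to the bracket $[-,-]_R$.

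For part (3), it remains only to check that $d$ is a derivation of $\mathcal{A}_R$, since $\mathrm{R\circ d = d\circ R}$ is assumed and part (2) supplies the modified Rota-Baxter structure on $\mathcal{A}_R$. A direct expansion of $\mathrm{d[a,b]_R}$ combined with the derivation property of $d$ on $\mathcal{A}$ and the commutation $\mathrm{d\circ R = R\circ d}$ gives $\mathrm{d[a,b]_R = [da,b]_R+[a,db]_R}$, completing the proof.
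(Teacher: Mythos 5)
Your parts (2) and (3) are correct and essentially identical to the paper's computations (the paper writes part (2) starting from $\mathrm{[Ra,Rb]_R}$ rather than from $\mathrm{R([Ra,b]_R+[a,Rb]_R)}$, but the content is the same), and the paper simply asserts part (1) as "easy to verify," so supplying the Jacobi verification is worthwhile. However, the cancellation pattern you describe in part (1) is wrong. The sum $\sum_{\mathrm{cyc}}[[\mathrm{Ra,Rb}],\mathrm{c}]=[[\mathrm{Ra,Rb}],\mathrm{c}]+[[\mathrm{Rb,Rc}],\mathrm{a}]+[[\mathrm{Rc,Ra}],\mathrm{b}]$ does \emph{not} vanish ``by Jacobi on $(\mathrm{Ra,Rb,Rc})$'': that instance of Jacobi gives $\sum_{\mathrm{cyc}}[[\mathrm{Ra,Rb}],\mathrm{Rc}]=0$, in which the outer argument also carries an $\mathrm{R}$, and there is no reason for your sum to vanish on its own (it does not, for a generic linear map and a generic Lie algebra). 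Likewise the three terms $[[\mathrm{Ra,Rc}],\mathrm{b}]+[[\mathrm{Rb,Ra}],\mathrm{c}]+[[\mathrm{Rc,Rb}],\mathrm{a}]$ obtained from pairing the six mixed terms do not cancel among themselves.

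What actually happens is that these two groups of three terms cancel \emph{against each other}: by antisymmetry of the bracket,
\[
[[\mathrm{Ra,Rc}],\mathrm{b}]+[[\mathrm{Rb,Ra}],\mathrm{c}]+[[\mathrm{Rc,Rb}],\mathrm{a}]
=-\bigl([[\mathrm{Rc,Ra}],\mathrm{b}]+[[\mathrm{Ra,Rb}],\mathrm{c}]+[[\mathrm{Rb,Rc}],\mathrm{a}]\bigr)
=-\sum_{\mathrm{cyc}}[[\mathrm{Ra,Rb}],\mathrm{c}],
\]
so the full cyclic sum is zero. Your expansion of $[[\mathrm{a,b}]_\mathrm{R},\mathrm{c}]_\mathrm{R}$ via the rewritten identity $\mathrm{R[a,b]_R=[Ra,Rb]-\lambda[a,b]}$, the disposal of the $\lambda$-terms by ordinary Jacobi, and the pairing of the mixed terms are all correct; only the final attribution of the cancellation needs to be fixed. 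With that one-line repair the proof of part (1) is complete, and the rest of your argument stands.
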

	\begin{proof}
		For all $\mathrm{a,b\in A}$ and $\mathrm{u \in V}$, we have
		\begin{enumerate}
			\item [1)] it is easy to verify that $\mathrm{(A,[-,-]_R)}$ is a Lie algebra.
			\item [2)] We need to show that $\mathrm{R}$ is a modified Rota-Baxter operator on $\mathcal{A}$.
			\begin{align*}
				\mathrm{[Ra,Rb]_\mathrm{R}}
				=&\mathrm{[R^2a,Rb]+[Ra,R^2b]}\\
				=&\mathrm{R([R^2a,y]+[Ra,Rb])+\lambda[Ra,b]}\\
				&+\mathrm{R([Ra,Rb]+[a,R^2b])+\lambda[a,Rb]}\\
				=&\mathrm{R([Ra,b]_\mathrm{R}+[a,Rb]_\mathrm{R})+\lambda[a,b]_\mathrm{R}}.
			\end{align*}
			\item [3)] Now, we need to prove that $\mathrm{d}$ is a derivation on $\mathrm{(A_R,R)}$,
			\begin{align*}
				\mathrm{d([a,b]_R)}
				=&\mathrm{d([Ra,b]+[a,Rb])}\\
				=&\mathrm{d([Ra,b])+d ([a,Rb])}\\
				=&\mathrm{[d(Ra),b]+[Ra,db]+[da,Rb]+[a,d(Rb)]}\\
				\overset{\ref{condition1 MRBLieDer pair}}{=}&\mathrm{[R(d a),b]+[Ra,d b]+[d a,Rb]+[a,R (db)]}\\
				=&\mathrm{\Big([R(d a),b]+ [d a,Rb]\Big)+\Big([Ra,d b]+[a,R( db)]\Big)}\\
				=&\mathrm{[d a,b]_R+[a,d b]_R},
			\end{align*}
			Thus, by equation \eqref{condition1 MRBLieDer pair} we obtain the result.
		\end{enumerate}
	\end{proof}
\begin{thm}\label{theorem needed in the cohomology}
	Let $(\mathcal{A},\mathrm{R,d})$ be a modified Rota-Baxter LieDer pair and $(\mathrm{V,R_V,d_{\mathrm{V}})}$ be a representation of it. Define a map
	\begin{equation}\label{rep of new Rota-Baxter LieDer pair}
		\mathrm{\rho_R(a)(u)=\rho(Ra)(u)-R_V(\rho(a)(u)),\quad \forall a\in A , u\in V}.
	\end{equation}
	Then $\mathrm{\rho_R}$ defines a representation of the LieDer pair $(\mathcal{A}_{\mathrm{R}},\mathrm{d})$ on $(\widetilde{\mathcal{V}},d_{\mathrm{V}})=(\mathrm{(V;\rho_R),d}_{\mathrm{V}})$.
	Moreover, $(\widetilde{\mathcal{V}},\mathrm{R_V,d_{V}})$ is a representation of the modified Rota-Baxter LieDer pair $(\mathcal{A}_{\mathrm{R}},\mathrm{R,d})$.
\end{thm}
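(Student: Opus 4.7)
The proof plan breaks into three verifications, reflecting the three structural layers of a representation of a modified Rota-Baxter LieDer pair: the underlying Lie representation axiom for $\rho_R$ on $\mathcal{A}_R$, the derivation compatibility between $\rho_R$ and $d_V$, and the modified Rota-Baxter condition relating $R_V$ and $\rho_R$ (plus the already-given $R_V d_V = d_V R_V$). Throughout I would fix $a,b\in A$, $u\in V$ and expand $\rho_R(a)u = \rho(Ra)u - R_V(\rho(a)u)$, then reduce everything via the modified Rota-Baxter identities for $R$ on $A$ and for $R_V$ on $V$.

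\textbf{Step 1 (Lie representation).} The target identity is $\rho_R([a,b]_R) = \rho_R(a)\rho_R(b) - \rho_R(b)\rho_R(a)$. I would expand the composition
\[
\rho_R(a)\rho_R(b)u = \rho(Ra)\rho(Rb)u - \rho(Ra)R_V\rho(b)u - R_V\rho(a)\rho(Rb)u + R_V\rho(a)R_V\rho(b)u,
\]
and use \eqref{Rep of RBDer pair2} applied to $v = \rho(b)u$, namely $\rho(Ra)R_V\rho(b)u = R_V\rho(Ra)\rho(b)u + R_V\rho(a)R_V\rho(b)u + \lambda\rho(a)\rho(b)u$, to cancel the quadratic term in $R_V$. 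The analogous computation for $\rho_R(b)\rho_R(a)u$ and subtraction leave
\[
\rho([Ra,Rb])u - R_V\rho([Ra,b]+[a,Rb])u - \lambda\rho([a,b])u.
\]
Invoking \eqref{modified RBO} to replace $[Ra,Rb] - \lambda[a,b]$ by $R([Ra,b]+[a,Rb]) = R[a,b]_R$ rewrites this as $\rho(R[a,b]_R)u - R_V\rho([a,b]_R)u = \rho_R([a,b]_R)u$, closing the first axiom.

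\textbf{Step 2 (LieDer compatibility of $\rho_R$ with $d_V$).} Here I would simply expand
\[
d_V\rho_R(a)u = d_V\rho(Ra)u - d_V R_V\rho(a)u
\]
and use the two given hypotheses $d_V\rho(c) = \rho(dc) + \rho(c)d_V$ (applied to $c = Ra$) and $d_V R_V = R_V d_V$ together with $R\circ d = d\circ R$ to group terms as $[\rho(R(da))u - R_V\rho(da)u] + [\rho(Ra)d_V u - R_V\rho(a)d_V u] = \rho_R(da)u + \rho_R(a)d_V u$. This is short and mechanical.

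\textbf{Step 3 (modified Rota-Baxter axiom for $(R_V,\rho_R)$).} The identity to establish is
\[
\rho_R(Ra)R_V u = R_V\bigl(\rho_R(Ra)u + \rho_R(a)R_V u\bigr) + \lambda \rho_R(a)u.
\]
For the left-hand side I would apply \eqref{Rep of RBDer pair2} with $a$ replaced by $Ra$ to get $\rho_R(Ra)R_V u = R_V\rho(R^2a)u + \lambda\rho(Ra)u$ after the internal $R_V\rho(Ra)R_V u$ cancels. For the right-hand side, the same modified Rota-Baxter identity yields $\rho_R(a)R_V u = R_V\rho(Ra)u + \lambda\rho(a)u$, which upon adding $\rho_R(Ra)u = \rho(R^2a)u - R_V\rho(Ra)u$ produces $\rho(R^2 a)u + \lambda\rho(a)u$; applying $R_V$ and adding $\lambda\rho_R(a)u$ gives exactly $R_V\rho(R^2 a)u + \lambda \rho(Ra)u$. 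Finally, $R_V d_V = d_V R_V$ from \eqref{Rep of RBDer pair3} is unchanged, so the last representation axiom is automatic.

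The first step is the main obstacle because it is the only place where the modified Rota-Baxter identities for $R$ on $A$ and for $R_V$ on $V$ must both be used together, and the $R_V$-quadratic and $\lambda$-linear corrections must be tracked carefully to see that they conspire to reproduce $\rho_R([a,b]_R)$ exactly; Steps 2 and 3 are essentially bookkeeping once Step 1 is laid out.
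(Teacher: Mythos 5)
Your proposal is correct and follows essentially the same route as the paper: the same expansion of $\rho_R(a)\rho_R(b)u$ with \eqref{Rep of RBDer pair2} applied to $\rho(b)u$ and \eqref{modified RBO} closing Step 1, the same mechanical check for the $d_V$-compatibility, and the same double use of \eqref{Rep of RBDer pair2} for the modified Rota-Baxter axiom (you reduce both sides to the common form $R_V\rho(R^2a)u+\lambda\rho(Ra)u$, while the paper rewrites the right-hand side directly into $\rho_R(Ra)(R_Vu)$, a purely cosmetic difference). No gaps.
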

\begin{proof}
	Let $\mathrm{a,b\in A}$ and $\mathrm{u\in V}$.
	\begin{align*}
		&\mathrm{\rho_R(a)\circ \rho_R(b)u-\rho_R(b)\circ \rho_R(a)u}\\
		=&\mathrm{\rho_R(a)(\rho(Rb)u-R_V\rho(b)u)-\rho_R(b)(\rho(Ra)u-R_V\rho(a)u)}\\
		=&\mathrm{\rho(Ra)\rho(Rb)u-\rho(Ra)R_V\rho(b)u-R_V\big(\rho(a)\rho(Rb)u-\rho(a)R_V\rho(b)u\big)}\\
		&-\mathrm{\rho(Rb)\rho(Ra)u+\rho(Rb)R_V\rho(a)u+R_V\big(\rho(b)\rho(Ra)u-\rho(b)R_V\rho(a)u\big)}\\
		=&\mathrm{\rho(Ra)\rho(Rb)u-R_V\big(\rho(Ra)\rho(b)u+\rho(a)R_V\rho(b)u\big)-\lambda\rho(a)\rho(b)u}\\
		&-\mathrm{R_V\big(\rho(a)\rho(Rb)u-\rho(a)R_V\rho(b)u\big)-\rho(Rb)\rho(Ra)u}\\
		&+\mathrm{R_V\big(\rho(Rb)\rho(a)u+\rho(b)R_V\rho(a)u\big)+\lambda\rho(b)\rho(a)u+R_V\big(\rho(b)\rho(Ra)u-\rho(b)R_V\rho(a)u\big)}\\
		=&\mathrm{\rho([Ra,Rb])u-R_V\big(\rho([Ra,b]+[a,Rb])\big)u-\lambda\rho([a,b])u}\\
		=&\mathrm{\rho(R[a,b]_\mathrm{R})+\lambda\rho([a,b])-R_V([a,b]_\mathrm{R})-\lambda\rho([a,b])}\\
		=&\mathrm{\rho(R[a,b]_\mathrm{R})-R_V([a,b]_\mathrm{R})}\\
		=&\mathrm{\rho_R([a,b]_\mathrm{R})u}.
	\end{align*}
	Wich means that $\mathrm{\rho_R}$ is a representation of $\mathcal{A}_\mathrm{R}$ on $\mathrm{V}$ in the context of Lie algebra structure. So we need just to show that $(\mathrm{\rho_R,d_V})$ is a representation of $(\mathrm{\mathcal{A}_R,d})$. Let $\mathrm{a\in A}$ and $\mathrm{u\in V}$, by using equations \eqref{condition1 MRBLieDer pair},\eqref{Rep of RBDer pair1} and \eqref{Rep of RBDer pair3} we have
	\begin{align*}
		\mathrm{d_{V}\circ \rho_R(a)u}
		=&\mathrm{d_{V}\circ (\rho(Ra)u-R_V(\rho(a)u))}\\
		=&\mathrm{d_{V}\circ \rho(Ra)u-d_{V}(R_V(\rho(a)u))}\\
		\overset{}{=}&\mathrm{\rho(d\circ Ra)u+\rho(Ra)\circ d_{V}u-d_{V}(R_V(\rho(a)u))}\\
		\overset{}{=}&\mathrm{\rho(d\circ Ra)u+\rho(Ra)\circ d_{V}u-R_V(d_{V}(\rho(a)u))}\\
		\overset{}{=}&\mathrm{\rho(d\circ Ra)u+\rho(Ra)\circ d_{V}u-R_V(\rho(d a)u+\rho(a)\circ d_{V}u)}\\
		\overset{}{=}&\mathrm{\rho(R\circ d a)u-R_V(\rho(d a)u)+\rho(Ra)\circ d_{V}u-R_V(\rho(a)\circ d_{V}u)}\\
		=&\mathrm{\rho_R(d a)u+\rho_R(a)\circ d_{V}u}.
	\end{align*}
	For the next result we have
	\begin{align*}
		&\mathrm{R_V\big(\rho_R(Ra)u+\rho_R(a)(R_Vu) \big)+\lambda\rho_R(a)u}\\
		=&\mathrm{R_V\Big(\rho(R(Ra))u-R_V(\rho(Ra))u+\rho(Ra)(R_Vu)-R_V(\rho(a)(R_Vu))\Big)+\lambda\big(\rho(Ra)u-R_V(\rho(a)u)\big)}\\
		=&\mathrm{R_V\Big(\rho(R(Ra))u+\rho(Ra)(R_Vu)\Big)+\lambda\rho(Ra)u-R_V\Big(R_V(\rho(a)(R_Vu)+\rho(Ra)u)+\lambda\rho(a)u\Big)}\\
		=&\mathrm{\rho(R(Rx))(R_Vu)-R_V(\rho(Rx)(R_Vu))}\\
		=&\mathrm{\rho_R(Ra)(R_Vu)}.
	\end{align*}
	And since equation \eqref{Rep of RBDer pair3} holds we complete the proof.
\end{proof}
	
	\section{Cohomology of modified Rota-Baxter LieDer pair}\label{sec3}
	\def\theequation{\arabic{section}.\arabic{equation}}
	In this section, we define the cohomology of modified Rota-Baxter LieDer pair.\\
	We first recall the Chevally-Eilenberg cohomology of Lie algebra and the cohomology of  modified Rota-Baxter Lie
	algebra with coefficients in an arbitrary representation. Then we define the
	cohomology of modified Rota-Baxter LieDer pair.\\
	Let $\mathrm{\mathcal{A}=(A,[-,-])}$ be a Lie algebra, the Chevally-Eilenberg cohomology of $\mathcal{A}$ with
	coefficients in the representation $\mathcal{V}$ is given by the cohomology of the cochain complex
	$\mathrm{\Big(\mathrm{C}^{\star}(A;V),\delta_\mathrm{CE}^\star\Big)}$ where
	$\mathrm{\mathrm{C}^n(A;V)=\mathrm{Hom}(\wedge^nA,V)}$ for $\mathrm{n\geq0}$ and the coboundary map
	\begin{center}
		$\mathrm{\delta_\mathrm{CE}^n:\mathrm{C}^n(A;V)\rightarrow \mathrm{C}^{n+1}(A;V)}$
	\end{center}
	is given by
	\begin{align*}
		\mathrm{\delta_\mathrm{CE}^n(f_n)(\mathrm{a_1,\cdots,a_{n+1}})}
		=&\mathrm{\displaystyle\sum_{i=1}^{n+1}(-1)^{i+n}\mathrm{\rho(a_i)}\mathrm{f_n(a_1,\cdots,\hat{a}_i,\cdots,a_{n+1}})}\\
		&\mathrm{+\displaystyle\sum_{1\leq i<j\leq n+1}(-1)^{i+j+n+1}f_n(\mathrm{[a_i,a_j],a_1,\cdots,\hat{a}_i,\cdots,\hat{a}_j,\cdots,a_{n+1}})}.
	\end{align*}
	For $\mathrm{f_n\in C^n({A;V})}$ and $\mathrm{\mathrm{a_1,\ldots,a_{n+1}}\in A}$.
	\subsection{Cohomology of modified Rota-Baxter Lie algebra}
	Now, using the Chevalley-Eillenberg cohomology for the induced Lie algebra 
	$\mathrm{\mathcal{A}_\mathrm{R}=(A,[-,-]_\mathrm{R})}$ 
	with coefficients in the representation 
	$\mathrm{(\widetilde{\mathcal{V}}=(V;\rho_\mathrm{R}))}$, we define the following:\\
	For $\mathrm{n\geq 0}$, we have $\mathrm{C^n_\mathrm{mRBO^\lambda}(A;V)=\mathrm{Hom}(\wedge^nA,V)}$ and the 
	coboundary operator
	\begin{equation*}
		\mathrm{\delta_\mathrm{mRBO^\lambda}^n:C^n_\mathrm{mRBO^\lambda}(A;V)\rightarrow C^{n+1}_\mathrm{mRBO^\lambda}(A;V)}
	\end{equation*}
	is given as follows
	\begin{align*}
		\mathrm{\delta_\mathrm{mRBO^\lambda}^n(f_n)(\mathrm{a_1,\cdots,a_{n+1}})}
		=&\mathrm{\displaystyle\sum_{i=1}^{n+1}(-1)^{i+n}\rho_\mathrm{R}\mathrm{(a_i)}f_n
			(\mathrm{a_1,\cdots,\hat{a}_i,\cdots,a_{n+1}})}\\
		&\mathrm{+\displaystyle\sum_{1\leq i<j\leq n+1}(-1)^{i+j+n+1}f_n(\mathrm{[a_i,a_j]_\mathrm{R},a_1,\cdots,
				\hat{a}_i,\cdots,\hat{a}_j,\cdots,a_{n+1}})}\\
		&\mathrm{=\sum_{i=1}^{n+1}(-1)^{i+n}\rho(\mathrm{R(a_{i})})(f_n\mathrm{(a_1,\cdots,\hat{a}_i,\cdots,a_{n+1})})}\\
		&\mathrm{-\sum_{i=1}^{n+1}(-1)^{i+n}R_\mathrm{V}(\rho(\mathrm{a_{i}})f_n(\mathrm{a_1,\cdots,\hat{a}_i,\cdots,a_{n+1})})}\\
		&\mathrm{+\sum_{1\leq i<j\leq n+1}(-1)^{i+j+n+1}f_n\mathrm{([R(a_i),a_j]+[a_i,R(a_j)],a_1,\cdots,\hat{a}_i,\cdots,\hat{a}_j,\cdots,a_{n+1}})}.
	\end{align*}
	Then $\mathrm{\Big(\mathrm{C}_\mathrm{mRBO^\lambda}^{\star}(A;V),\delta_\mathrm{mRBO^\lambda}^\star\Big)}$ is a cochain complex.\\
	Now, motivated by the Proposition 3.2 of \cite{A3} and definition 4.1 of \cite{B1}, we introduce the following
	\begin{defi}
		Let $\mathrm{(\mathcal{A},R)}$ be a modified Rota-Baxter Lie algebra of weight $\mathrm{\lambda}$ and 
		$\mathrm{(\mathcal{V},R_\mathrm{V})}$ be a representation of it. We define a map
		\[\mathrm{\phi^{n} : C^{n}(A,V) \rightarrow C^{n}_\mathrm{mRBO^\lambda}(A,V)}\]
		as follows:
		\begin{align*}
			&\mathrm{\phi^0(f_0)=\mathrm{Id_V}};\\
			&\mathrm{\phi^n(f_n)(\mathrm{a_1,a_2,\ldots,a_n})=f_n(\mathrm{Ra_1,Ra_2,\ldots, Ra_n})}\\
			&\mathrm{-\sum _{1\leq i_1<i_2< \cdots < i_r\leq n ,r~ \mbox{odd}}{(-\lambda)^{\frac{r-1}{2}}} 
				(\mathrm{R_V}\circ f_n)(\mathrm{R(a_1),\ldots,a_{i_1},\ldots,a_{i_r},\ldots,R(a_n}))}\\
			&\mathrm{-\sum _{1\leq i_1<i_2< \cdots < i_r\leq n ,r~ \mbox{even}}{(-\lambda)^{\frac{r}{2}+1}} 
				(\mathrm{R_V}\circ f_n)(\mathrm{R(a_1),\ldots,a_{i_1},\ldots,a_{i_r},\ldots,R(a_n))}}.
		\end{align*}
	\end{defi}
	\begin{lem}
		\begin{equation}\label{equation morphism and coboboundary}
			\mathrm{\phi^{n+1}(\delta^n_\mathrm{CE}(f_n))=\delta^n_\mathrm{mRBO^\lambda}
				(\phi^n(f_n)),\quad \text{where } f_n\in C^n(A;V)}.
		\end{equation}
		
	\end{lem}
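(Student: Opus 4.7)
The plan is to verify the chain-map identity $\phi^{n+1}(\delta^n_{\mathrm{CE}}(f_n)) = \delta^n_{\mathrm{mRBO}^\lambda}(\phi^n(f_n))$ by direct expansion and term-by-term comparison. On the right-hand side, I would substitute the explicit formula for $\phi^n(f_n)$ into the three pieces of $\delta^n_{\mathrm{mRBO}^\lambda}$: the $\rho(Ra_i)$ action, the $-R_V\rho(a_i)$ action, and the bracket contributions $f_n([Ra_i,a_j]+[a_i,Ra_j],\ldots)$ (here having already expanded $\rho_R(a) = \rho(Ra) - R_V\rho(a)$ and $[a,b]_R = [Ra,b]+[a,Rb]$ via \eqref{rep of new Rota-Baxter LieDer pair} and \eqref{induced RBLieDer}). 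Each piece thereby becomes a signed sum over subsets $I \subset \{1,\ldots,n+1\}$ specifying which arguments remain untwisted and which get wrapped in $R$, with the parity-dependent coefficients $(-\lambda)^{(r-1)/2}$ or $(-\lambda)^{r/2+1}$ appearing as in the definition of $\phi$.

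On the left-hand side I would expand $\phi^{n+1}$ of $\delta^n_{\mathrm{CE}}(f_n)$, yielding the same skeleton of subset sums but with the $R$-twisting applied to all $n+1$ arguments before the Chevalley--Eilenberg coboundary is evaluated. The critical identification between the two sides comes from two applications of the modified Rota-Baxter relations. First, the operator form \eqref{modified RBO}, $[Ra,Rb] = R([Ra,b]+[a,Rb]) + \lambda[a,b]$, handles the bracket slot: the left-hand side produces $f_n([Ra_i,Ra_j],\ldots)$, which splits into one piece matching the bracket term on the right together with a $\lambda$-remainder $\lambda f_n([a_i,a_j],\ldots)$ that increases the size of the untwisted index set by two and so feeds into an adjacent stratum of the $\phi$ expansion. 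Second, the representation form \eqref{Rep of RBDer pair2}, $\rho(Ra)(R_Vu) = R_V(\rho(Ra)u + \rho(a)R_Vu) + \lambda\rho(a)u$, handles the $\rho$ slot in exactly the parallel way: whenever the outer $\rho(Ra_i)$ meets an $R_V$-wrapped inner term of $\phi^n(f_n)$, each such application generates a $\lambda$-remainder that shifts the stratum.

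The main obstacle will be purely combinatorial bookkeeping: one must verify that the $\lambda$-remainders produced by these two substitutions, weighted by $(-\lambda)^{(r-1)/2}$ or $(-\lambda)^{r/2+1}$ according to the parity of $r = |I|$, match exactly the neighbouring strata arising from the $\phi$ expansion on the opposite side, while preserving the sign conventions $(-1)^{i+n}$, $(-1)^{i+j+n+1}$ and the hat-insertion positions. A cleaner presentation splits $\phi^n$ into its ``odd-$r$'' and ``even-$r$'' parts and tracks how each application of a modified Rota-Baxter relation sends one parity to the other. An alternative is induction on $n$: the base case $n=0$ reduces essentially to \eqref{Rep of RBDer pair2}, and for the inductive step one isolates a distinguished variable, say $a_{n+1}$, and splits every sum according to whether its index lies in the twisted or untwisted stratum. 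In either strategy, once the combinatorial alignment is carried out for a generic subset $I$, the identity \eqref{equation morphism and coboboundary} follows.
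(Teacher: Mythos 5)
The paper does not actually prove this lemma: it simply points to Lemma 4.2 of \cite{B1} and Proposition 5.2 of \cite{K0}, where the analogous chain-map identity is verified for modified Rota-Baxter Leibniz algebras and for weighted Rota-Baxter algebras, respectively. Your proposal reconstructs precisely the argument that those references carry out: expand $\phi^{n+1}(\delta^n_{\mathrm{CE}}f_n)$ and $\delta^n_{\mathrm{mRBO^\lambda}}(\phi^n f_n)$ as signed sums over the subsets of arguments left untwisted, then match strata by applying the operator identity \eqref{modified RBO} in the bracket slot and the representation identity \eqref{Rep of RBDer pair2} in the $\rho$ slot, with the resulting $\lambda$-remainders accounting for the passage between adjacent strata. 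Your observation that each such application enlarges the untwisted index set and toggles between the coefficient families $(-\lambda)^{(r-1)/2}$ and $(-\lambda)^{r/2+1}$ is exactly the point on which the verification turns, so the mechanism you identify is the right one. What your route buys over the paper is a self-contained argument rather than a citation; what it has not yet delivered is the combinatorial bookkeeping itself, which is the entire content of the lemma. As written this is a correct and well-aimed plan rather than a finished proof: to complete it you would need to carry out the matching for a generic index subset $I$ (verifying the signs $(-1)^{i+n}$ and $(-1)^{i+j+n+1}$ survive the re-indexing after the hatted arguments are removed), or else run the induction on $n$ you mention, whose base case is indeed a restatement of \eqref{Rep of RBDer pair2}.
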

	\begin{proof}
		The proof is similar to (Lemma 4.2 of \cite{B1}) and (Proposition 5.2 of \cite{K0}).
	\end{proof}
	\begin{defi}
		Let $\mathrm{(\mathcal{A}_\mathrm{R},R)}$ be a modified Rota-Baxter Lie algebra of weight $\mathrm{\lambda}$ and 
		$\mathrm{(\widetilde{\mathcal{V}},R_\mathrm{V})}$ be a representation of it. Now, we define for $\mathrm{n\geq1}$,
		\begin{equation*}
			\mathrm{C^n_\mathrm{mRBLA^\lambda}(A;V)=
				C^n(A;V)\oplus C^{n-1}_\mathrm{mRBO^\lambda}(A;V)}
		\end{equation*}
		
		and the coboundary map is defined as
		\begin{equation*}
			\mathrm{\partial^n_{\mathrm{mRBLA^\lambda}}:C^n_\mathrm{mRBLA^\lambda}(A;V)\rightarrow 
				C^{n+1}_\mathrm{mRBLA^\lambda}(A;V)}
		\end{equation*}
		by
		\begin{equation}\label{coboundary of mRBLA}
			\mathrm{\partial^n_{\mathrm{mRBLA^\lambda}}(f_n,g_{n-1})=(\delta^n_\mathrm{CE}(f_n),-\delta^{n-1}_\mathrm{mRBO^\lambda}
				(g_{n-1})-\phi^n(f_n)),\quad \text{ for } (f_n,g_{n-1})\in C^n_\mathrm{mRBLA^\lambda}(A;V)}
		\end{equation}
	\end{defi}
	\begin{thm}
		With The above notations we have $\mathrm{\big(C^\star_\mathrm{mRBLA^\lambda}(A;V),
			\partial^\star_{\mathrm{mRBLA^{\lambda}}}\big)}$ is a cochain complex, i.e,
		\begin{equation}\label{mRBLA coboundary}
			\mathrm{\partial^{n+1}_{\mathrm{mRBLA^{\lambda}}}\circ \partial^n_{\mathrm{mRBLA^{\lambda}}}=0,\quad \forall n\geq1}.
		\end{equation}
	\end{thm}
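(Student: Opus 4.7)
The plan is to verify $\partial^{n+1}_{\mathrm{mRBLA}^\lambda} \circ \partial^{n}_{\mathrm{mRBLA}^\lambda} = 0$ by directly expanding the composite on a pair $(f_n, g_{n-1}) \in C^n_{\mathrm{mRBLA}^\lambda}(A;V)$ using the formula \eqref{coboundary of mRBLA}, and then reducing the two coordinates of the result to zero separately, each by appeal to a result we already have.

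First I would apply $\partial^n_{\mathrm{mRBLA}^\lambda}$ to $(f_n,g_{n-1})$, obtaining the pair $(\delta^n_{\mathrm{CE}}(f_n),\,-\delta^{n-1}_{\mathrm{mRBO}^\lambda}(g_{n-1}) - \phi^n(f_n))$, and then apply $\partial^{n+1}_{\mathrm{mRBLA}^\lambda}$ again. The first coordinate of the result is simply $\delta^{n+1}_{\mathrm{CE}} \circ \delta^n_{\mathrm{CE}}(f_n)$, which vanishes because $(C^\star(A;V),\delta^\star_{\mathrm{CE}})$ is the Chevalley–Eilenberg cochain complex of the Lie algebra $\mathcal{A}$ with coefficients in $\mathcal{V}$. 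The second coordinate, after distributing signs carefully, becomes
\begin{align*}
\delta^n_{\mathrm{mRBO}^\lambda}\bigl(\delta^{n-1}_{\mathrm{mRBO}^\lambda}(g_{n-1})\bigr) + \delta^n_{\mathrm{mRBO}^\lambda}\bigl(\phi^n(f_n)\bigr) - \phi^{n+1}\bigl(\delta^n_{\mathrm{CE}}(f_n)\bigr).
\end{align*}

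The first summand here vanishes because $(C^\star_{\mathrm{mRBO}^\lambda}(A;V), \delta^\star_{\mathrm{mRBO}^\lambda})$ has already been set up as a Chevalley–Eilenberg cochain complex, namely the one associated to the induced Lie algebra $\mathcal{A}_{\mathrm{R}}$ with coefficients in the representation $(\widetilde{\mathcal{V}},\rho_{\mathrm{R}})$ provided by Theorem \ref{theorem needed in the cohomology}. The difference of the remaining two summands is exactly the content of the intertwining lemma \eqref{equation morphism and coboboundary}, which says $\phi^{n+1} \circ \delta^n_{\mathrm{CE}} = \delta^n_{\mathrm{mRBO}^\lambda} \circ \phi^n$, so this difference is zero as well.

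Thus both coordinates vanish and the proof is complete; the only real work is bookkeeping with signs in the second coordinate to recognize the three-term combination above, together with invoking the three already-established facts (nilpotency of $\delta_{\mathrm{CE}}$, nilpotency of $\delta_{\mathrm{mRBO}^\lambda}$, and the chain map relation \eqref{equation morphism and coboboundary}). I do not expect any genuine obstacle, only the minor subtlety of making sure the sign conventions in \eqref{coboundary of mRBLA} indeed force the cross terms to assemble as $\delta^n_{\mathrm{mRBO}^\lambda}\!\phi^n(f_n) - \phi^{n+1}\delta^n_{\mathrm{CE}}(f_n)$ rather than with a wrong sign that would obstruct the appeal to \eqref{equation morphism and coboboundary}.
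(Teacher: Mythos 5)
Your proposal is correct and follows essentially the same route as the paper: expand the composite componentwise, kill the first coordinate by nilpotency of $\delta_{\mathrm{CE}}$, and reduce the second coordinate to $\delta_{\mathrm{mRBO^\lambda}}(\phi^n f_n)-\phi^{n+1}(\delta^n_{\mathrm{CE}}f_n)$ plus a term annihilated by nilpotency of $\delta_{\mathrm{mRBO^\lambda}}$, then invoke the intertwining relation \eqref{equation morphism and coboboundary}. The sign bookkeeping you flag works out exactly as you anticipate.
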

	\begin{proof}
		For $\mathrm{(f_n,g_{n-1})\in C^n_\mathrm{mRBLA^\lambda}(A;V)}$, using the fact that 
		$\mathrm{\big(C^\star(A;V),\delta^\star_\mathrm{CE}\big)}$ and 
		$\mathrm{\big(C^\star_\mathrm{mRBO^\lambda}(A;V),\delta^\star_\mathrm{mRBO^\lambda}\big)}$ are 
		both too complex cochains and \eqref{equation morphism and coboboundary} we have
		\begin{align*}
			\mathrm{\partial^{n+1}_\mathrm{mRBLA^\lambda}\circ \partial^n_\mathrm{mRBLA^\lambda}(f_n,g_{n-1})}
			&=\mathrm{\partial^{n+1}_\mathrm{mRBLA^\lambda}\big(\delta^n_\mathrm{CE}f_n,-\delta^{n-1}_\mathrm{mRBO^\lambda}g_{n-1}-\phi^nf_n\big)}\\
			&=\mathrm{\big(\delta^{n+1}_\mathrm{CE} (\delta^n_\mathrm{CE}f_n),-\delta^{n-1}_\mathrm{mRBO^\lambda}(-\delta^{n-1}_\mathrm{mRBO^\lambda}g_{n-1}-\phi^nf_n)-\phi^n(\delta^n_\mathrm{CE}f_n)\big)}\\
			&=\mathrm{\big(0,\delta^{n-1}_\mathrm{mRBO^\lambda}(\phi^nf_n)-\phi^n(\delta^n_\mathrm{CE}f_n)\big)}\\
			&=\mathrm{0}.
		\end{align*}
	\end{proof}
	With respect to the representation $(\mathcal{V},\mathrm{R_V},\mathrm{d_V})$ we obtain a complex 
	$\mathrm{\Big(\mathfrak{C}^{\star}_{\mathrm{mRBLA^{\lambda}}}
		(A,V),\partial^\star_{\mathrm{mRBLA^{\lambda}}}\Big)}$. Let $\mathrm{Z^n_{\mathrm{mRBLA^{\lambda}}}(A,V)}$ and
	$\mathrm{B^n_{mRBLA^{\lambda}}(A,V)}$ denote the space of $\mathrm{n}$-cocycles and $\mathrm{n}$-coboundaries, respectively.
	Then we define the corresponding cohomology groups by
	\begin{equation*}
		\mathrm{\mathcal{H}^n_{\mathrm{mRBLA^{\lambda}}}(A,V):=\frac{Z^n_{\mathrm{mRBLA^{\lambda}}}(A,V)}
			{B^n_{\mathrm{mRBLA^{\lambda}}}(A,V)},\quad \text{for } n\geq1}.
	\end{equation*}
	They are called \textbf{the cohomology of modified Rota-Baxter Lie algebra} $(\mathcal{A},R)$ with coefficients in
	the representation $(\mathcal{V},R_V)$.
	
	
	\subsection{Cohomology of modified Rota-Baxter LieDer pair}
	In this subsection we introduce the cohomology of modified Rota-Baxter LieDer pair.
	\\
	Define a linear map
	
	\begin{equation*}
		\mathrm{\Delta^n:C^n_\mathrm{mRBLA^\lambda}(A;V)\rightarrow C^n_\mathrm{mRBLA^\lambda}(A;V)\quad \text{ by }}
	\end{equation*}
	\begin{equation}\label{coboundary4}
		\mathrm{\Delta^n(f_n,g_{n-1}):=(\Delta^n(f_n),\Delta^{n-1}(g_{n-1})),\quad \forall (f_n,g_{n-1})
			\in C^n_\mathrm{mRBLA^\lambda}(A;V)}.
	\end{equation}
	Where
	\begin{equation*}
		\mathrm{\Delta^n (f_n)=\displaystyle\sum_{i=1}^nf_n\circ (\mathrm{Id_A}\otimes \cdots\otimes d\otimes \cdots\otimes 
			\mathrm{Id_A})-d_{\mathrm{V}}\circ f_n}.
	\end{equation*}

	In the next proposition we show that $\mathrm{\phi^\star}$ and $\mathrm{\Delta^\star}$ are commutative, which 
	is useful in the cohomology.
	\begin{pro}
		\begin{equation}\label{coboundary3}
			\mathrm{\phi^n\circ\Delta^n=\Delta^n\circ\phi^n}.
		\end{equation}
	\end{pro}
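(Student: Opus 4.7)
The plan is to prove the identity by direct computation: expand both $\phi^n(\Delta^n f_n)(a_1,\ldots,a_n)$ and $\Delta^n(\phi^n f_n)(a_1,\ldots,a_n)$ according to the definitions of $\phi^n$ and $\Delta^n$, then verify that all terms match. The only structural inputs I expect to need are the two commutation identities $\mathrm{R}\circ \mathrm{d}=\mathrm{d}\circ \mathrm{R}$ from \eqref{condition1 MRBLieDer pair} and $\mathrm{R_V}\circ \mathrm{d_V}=\mathrm{d_V}\circ \mathrm{R_V}$ from \eqref{Rep of RBDer pair3}; remarkably, neither the Lie bracket nor the representation map $\rho$ will enter the argument.

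To make the bookkeeping manageable, I would first repackage $\phi^n(f_n)$ as a single sum indexed by subsets $I\subseteq\{1,\ldots,n\}$. Setting $b_k^I=a_k$ for $k\in I$ and $b_k^I=\mathrm{R}a_k$ for $k\notin I$, and collecting the constants from the two subset sums in the definition into a scalar $c_I\in\mathbb{K}$ and exponent $\varepsilon(I)\in\{0,1\}$ (with $c_\emptyset=1$, $\varepsilon(\emptyset)=0$, and $\mathrm{R_V}$ appearing exactly once whenever $I\neq\emptyset$), I obtain
\begin{equation*}
\phi^n(f_n)(a_1,\ldots,a_n)=\sum_{I\subseteq\{1,\ldots,n\}} c_I\,\mathrm{R_V}^{\varepsilon(I)}f_n(b_1^I,\ldots,b_n^I).
\end{equation*}
In this compact form both sides become subset-indexed double sums and the comparison becomes transparent.

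For the left-hand side, substituting $\Delta^n(f_n)(c_1,\ldots,c_n)=\sum_{j=1}^n f_n(c_1,\ldots,\mathrm{d}c_j,\ldots,c_n)-\mathrm{d_V} f_n(c_1,\ldots,c_n)$ at the arguments $(b_1^I,\ldots,b_n^I)$ produces a ``derivation-in-slot-$j$'' block plus a ``$-\mathrm{d_V}$ outside'' block, each carrying the factor $c_I\mathrm{R_V}^{\varepsilon(I)}$. For the right-hand side, $\Delta^n(\phi^n f_n)$ equals $\sum_j\phi^n(f_n)(a_1,\ldots,\mathrm{d}a_j,\ldots,a_n)-\mathrm{d_V}\phi^n(f_n)(a_1,\ldots,a_n)$; in the first piece, inserting $\mathrm{d}a_j$ into slot $j$ produces $\mathrm{d}a_j$ when $j\in I$ and $\mathrm{R}\mathrm{d}a_j$ when $j\notin I$, and \eqref{condition1 MRBLieDer pair} rewrites the latter as $\mathrm{d}\mathrm{R}a_j=\mathrm{d}b_j^I$, so both cases collapse to $\mathrm{d}b_j^I$; this reproduces the first block of the left-hand side. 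For the second piece, pulling $\mathrm{d_V}$ past $\mathrm{R_V}^{\varepsilon(I)}$ via \eqref{Rep of RBDer pair3} reproduces the second block exactly.

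The main obstacle is not conceptual but combinatorial: keeping the subset-indexed notation honest through the expansion. The one substantive point is verifying that ``applying $\mathrm{d}$ at position $j$'' commutes with ``placing $\mathrm{R}$ at position $j$'', which is literally the content of \eqref{condition1 MRBLieDer pair}. Everything else is term-matching, and the same strategy also applies to the second component $\Delta^{n-1}(g_{n-1})$ of \eqref{coboundary4}, so no separate argument is needed there.
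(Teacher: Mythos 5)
Your proof is correct and takes essentially the same route as the paper: a direct expansion of both compositions followed by term-matching, with the only structural inputs being $\mathrm{R}\circ\mathrm{d}=\mathrm{d}\circ\mathrm{R}$ from \eqref{condition1 MRBLieDer pair} and $\mathrm{R_V}\circ\mathrm{d_V}=\mathrm{d_V}\circ\mathrm{R_V}$ from \eqref{Rep of RBDer pair3}. Your subset-indexed repackaging of $\phi^n$ is just a cleaner bookkeeping of the paper's explicit odd/even sums, not a different argument.
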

	\begin{proof}
		For $\mathrm{f_n\in C^n(A;V)}$ and $\mathrm{a_1,\cdots,a_n\in A}$ we have
		\begin{align*}
			&\mathrm{\phi^n\circ\Delta^nf_n(\mathrm{a_1,\cdots,a_n})}\\
			=&\mathrm{\Delta^n(f_n(\mathrm{Ra_1},\cdots\mathrm{Ra_n}))}\\
			&\mathrm{-\sum _{1\leq i_1<i_2< \cdots < i_r\leq n ,r~ \mbox{odd}}{(-\lambda)^{\frac{r-1}{2}}} (R_V\circ\Delta^n f_n)(\mathrm{R(a_1),\ldots,a_{i_1},\ldots,a_{i_r},\ldots,R(a_n)})}\\
			&\mathrm{-\sum _{1\leq i_1<i_2< \cdots < i_r\leq n ,r~ \mbox{even}}{(-\lambda)^{\frac{r}{2}+1}} (R_V\circ\Delta^n f_n)(\mathrm{R(a_1),\ldots,a_{i_1},\ldots,a_{i_r},\ldots,R(a_n)})}\\
			=&\mathrm{\displaystyle\sum_{\mathrm{k=1}}^nf_n(\mathrm{Ra_1,\cdots,d\circ Ra_i,\cdots,Ra_n})-\mathrm{d_V}\circ f_n\mathrm{(Ra_1,\cdots,Ra_n)}}\\
			&\mathrm{-\sum _{1\leq i_1<i_2< \cdots < i_r\leq n ,r~ \mbox{odd}}{(-\lambda)^{\frac{r-1}{2}}}\Big( \displaystyle\sum_{\mathrm{k\neq i_1,\cdots,i_r}}(\mathrm{R_V}\circ f_n)
				\mathrm{(Ra_1,\cdots,d\circ Ra_k,\cdots,a_{i_1},\cdots,a_{i_r},\cdots,Ra_n)}}\\
			&\mathrm{+(\mathrm{R_V}\circ f_n)\displaystyle\sum_{\mathrm{p=1}}^r(\mathrm{Ra_1,\cdots,a_{i_1},\cdots,d_Va_{i_p},\cdots,a_{i_r},\cdots,Ra_n})}\\
			&\mathrm{-(\mathrm{R_V}\circ \mathrm{d_V}f_n)(\mathrm{(Ra_1,\cdots,a_{i_1},\cdots,a_{i_r},\cdots,Ra_n)})\Big)}\\
			&\mathrm{-\sum _{1\leq i_1<i_2< \cdots < i_r\leq n ,r~ \mbox{even}}{(-\lambda)^{\frac{r}{2}+1}}
				\Big( \displaystyle\sum_{\mathrm{k\neq i_1,\cdots,i_r}}(\mathrm{R_V}\circ f_n)\mathrm{(Ra_1,\cdots,
					d\circ Ra_k,\cdots,a_{i_1},\cdots,a_{i_r},\cdots,Ra_n)}}\\
			&\mathrm{+(\mathrm{R_V}\circ f_n)\displaystyle\sum_{\mathrm{p=1}}^r(\mathrm{Ra_1,\cdots,a_{i_1},
					\cdots,da_{i_p},\cdots,a_{i_r},\cdots,Ra_n})}\\
			&\mathrm{-(\mathrm{R_V}\circ \mathrm{d_V}f_n)(\mathrm{(Ra_1,\cdots,a_{i_1},\cdots,a_{i_r},\cdots,Ra_n)})\Big)}\\
			=&\mathrm{\Delta^n\circ\phi^nf_n(\mathrm{a_1,\cdots,a_n})}
		\end{align*}
	\end{proof}
	Recall that, from the cohomology of LieDer pair (\cite{R0}. Lemma(3.1)), we have
	\begin{equation}\label{coboundary1}
		\mathrm{\delta^n_\mathrm{CE}\circ\Delta^n=\Delta^{n+1}\circ\delta^n_\mathrm{CE}}.
	\end{equation}
	
	Also since $\mathrm{\mathcal{A}_{\mathrm{R}}}$ is a Lie algebra and 
	$\mathrm{\delta_\mathrm{mRBO^\lambda}^\star}$ its coboundary with respect to the representation 
	$\mathrm{\widetilde{V}=(V;\rho_R)}$ and  $\mathrm{(\mathcal{A}_{\mathrm{R}},d)}$ is a LieDer pair then we get
	\begin{equation}\label{coboundary2}
		\mathrm{\delta^n_\mathrm{mRBO^\lambda}\circ\Delta^n=	\Delta^{n+1}\circ\delta^n_\mathrm{mRBO^\lambda}}.
	\end{equation}
	\begin{pro}
		With the above notations, $\Delta^\star$ is a cochain map, i.e.
		\begin{equation}\label{coboundary5}
			\mathrm{\partial^n_\mathrm{mRBLA^\lambda}\circ \Delta^n
				=\Delta^{n+1}\circ\partial^n_\mathrm{mRBLA^\lambda}}
		\end{equation}
	\end{pro}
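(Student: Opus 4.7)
The plan is a direct verification by unpacking the definitions and invoking the three commutativity identities \eqref{coboundary1}, \eqref{coboundary2}, and \eqref{coboundary3} already established just above the statement. No new computation involving $\mathrm{R}$, $\mathrm{R_V}$, $\mathrm{d}$, or $\mathrm{d_V}$ is required; the whole thing is bookkeeping.

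First I would fix an arbitrary element $(f_n, g_{n-1}) \in C^n_{\mathrm{mRBLA}^\lambda}(A;V) = C^n(A;V) \oplus C^{n-1}_{\mathrm{mRBO}^\lambda}(A;V)$ and compute both sides of \eqref{coboundary5} componentwise. By \eqref{coboundary of mRBLA} and \eqref{coboundary4}, the left-hand side equals
\[
\partial^n_{\mathrm{mRBLA}^\lambda}\bigl(\Delta^n(f_n),\Delta^{n-1}(g_{n-1})\bigr)
= \bigl(\delta^n_{\mathrm{CE}}(\Delta^n f_n),\; -\delta^{n-1}_{\mathrm{mRBO}^\lambda}(\Delta^{n-1} g_{n-1}) - \phi^n(\Delta^n f_n)\bigr),
\]
while the right-hand side expands to
\[
\Delta^{n+1}\bigl(\delta^n_{\mathrm{CE}} f_n,\; -\delta^{n-1}_{\mathrm{mRBO}^\lambda} g_{n-1} - \phi^n f_n\bigr)
= \bigl(\Delta^{n+1}(\delta^n_{\mathrm{CE}} f_n),\; -\Delta^n(\delta^{n-1}_{\mathrm{mRBO}^\lambda} g_{n-1}) - \Delta^n(\phi^n f_n)\bigr),
\]
using linearity of $\Delta^n$ on the second component.

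Next I would match components. Equality of the first components is precisely \eqref{coboundary1}, namely $\delta^n_{\mathrm{CE}} \circ \Delta^n = \Delta^{n+1} \circ \delta^n_{\mathrm{CE}}$. For the second components, I would split into two summands: the $\delta^{n-1}_{\mathrm{mRBO}^\lambda}$-terms agree by \eqref{coboundary2}, and the $\phi^n$-terms agree by \eqref{coboundary3}. Combining these three equalities yields \eqref{coboundary5}.

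Since all three commutativity identities are invoked as already-proven facts, the proof is essentially one line per component. There is no real obstacle; the only minor care needed is to track the sign in front of $\delta^{n-1}_{\mathrm{mRBO}^\lambda} g_{n-1} + \phi^n f_n$ through $\Delta^{n+1}$, which is straightforward because $\Delta^{n+1}$ is $\mathbb{K}$-linear. Consequently the proposition reduces cleanly to the three lemmas already in place.
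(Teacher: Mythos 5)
Your proposal is correct and follows essentially the same route as the paper: expand both sides componentwise via \eqref{coboundary of mRBLA} and \eqref{coboundary4}, then match the first components by \eqref{coboundary1} and the two summands of the second component by \eqref{coboundary2} and \eqref{coboundary3}. Nothing is missing.
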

	\begin{proof}
		For $\mathrm{(f_n,g_{n-1})\in C^n_\mathrm{mRBLA^\lambda}(A;V)}$ and by using \eqref{coboundary1}, \eqref{coboundary2}, \eqref{coboundary3} and \eqref{coboundary4} we have
		\begin{align*}
			\mathrm{\partial^n_\mathrm{mRBLA^\lambda}(\Delta^n(f_n,g_{n-1}))}
			&=\mathrm{\partial^n_\mathrm{mRBLA^\lambda}(\Delta^nf_n,\Delta^{n-1}g_{n-1})}\\
			&=\mathrm{\Big(\delta^n_\mathrm{CE}(\Delta^nf_n),-\delta^{n-1}_\mathrm{mRBO^\lambda}
				(\Delta^{n-1}g_{n-1})-\phi^n(\Delta^nf_n)\Big)}\\
			&\mathrm{=\Big(\Delta^{n+1}(\delta^n_\mathrm
				{CE}f_n),-\Delta^n(\delta^{n-1}_\mathrm{mRBO^\lambda}g_{n-1})-\Delta^n(\phi^nf_n)\Big)}\\
			&\mathrm{=\Delta^{n+1}(\partial^n_\mathrm{mRBLA^\lambda}(f_n,g_{n-1}))}
		\end{align*}
	\end{proof}
	Using all those tools we are in position to define the cohomology of modified Rota-Baxter LieDer pair 
	$\mathrm{(\mathcal{A},R,d)}$ with coefficients in a representation $\mathrm{(\mathcal{V},R_\mathrm{V},d_\mathrm{V})}$.\\
	Denote
	\begin{equation*}
		\mathrm{\mathfrak{C}^n_{\mathrm{mRBLD^{\lambda}}}(A;V):=C^n_\mathrm{mRBLA^\lambda}(A;V)\times
			C^{n-1}_{\mathrm{mRBLA^{\lambda}}}(A;V),\quad n\geq2},
	\end{equation*}
	and
	\begin{equation*}
		\mathrm{\mathfrak{C}^1_{\mathrm{mRBLD^{\lambda}}}(A;V):=C^1(A;V)}.
	\end{equation*}
	Define a linear map
	\begin{eqnarray*}
		\mathrm{\mathfrak{D}^1_{\mathrm{mRBLD^{\lambda}}}}&:&\mathrm{\mathfrak{C}^1_{\mathrm{mRBLD^{\lambda}}}
			(A;V)\rightarrow \mathfrak{C}^2_{\mathrm{mRBLD^{\lambda}}}(A;V) \text{ by}}\\
		&&\mathrm{\mathfrak{D}^1_{\mathrm{mRBLD^{\lambda}}}(f_1)
			=(\partial^1_\mathrm{mRBLA^\lambda}(f_1),-\Delta^1(f_1)),\quad \forall f_1\in
			C^1(A;V)},
	\end{eqnarray*}
	and when $\mathrm{n\geq2}$
	
	\begin{equation*}
		\mathrm{\mathfrak{D}^n_{\mathrm{mRBLD^{\lambda}}}:\mathfrak{C}^n_{\mathrm{mRBLD^{\lambda}}}(A;V)\rightarrow
			\mathfrak{C}^{n+1}_{\mathrm{mRBLD^{\lambda}}}(A;V)}
	\end{equation*}
	is defined by
	\begin{equation}\label{mRBLD coboundary}
		\mathrm{\mathfrak{D}^n_{\mathrm{mRBLD^{\lambda}}}((f_n,g_{n-1}),(h_{n-1},s_{n-2}))
			=(\partial^n_\mathrm{mRBLA^\lambda}(f_n,g_{n-1}),\partial^{n-1}_\mathrm{mRBLA^\lambda}(h_{n-1},s_{n-2})
			+(-1)^n\Delta^n(f_n,g_{n-1}))}.
	\end{equation}
	
	\begin{thm}
		With The above notations we have 
		$\mathrm{\big(\mathfrak{C}^\star_\mathrm{mRBLD^\lambda}(A;V),\mathfrak{D}^\star_{\mathrm{mRBLD^{\lambda}}}\big)}$ is a cochain complex, i.e,
		\begin{equation*}
			\mathrm{\mathfrak{D}^{n+1}_{\mathrm{mRBLD^{\lambda}}}\circ \mathfrak{D}^n_{\mathrm{mRBLD^{\lambda}}}=0,
				\quad \forall n\geq1}.
		\end{equation*}
	\end{thm}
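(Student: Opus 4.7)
The plan is to prove $\mathfrak{D}^{n+1}_{\mathrm{mRBLD^\lambda}}\circ\mathfrak{D}^{n}_{\mathrm{mRBLD^\lambda}}=0$ by direct computation using the two ingredients already in hand: the fact that $\partial^{\star}_{\mathrm{mRBLA^\lambda}}$ squares to zero (equation \eqref{mRBLA coboundary}) and the chain-map property of $\Delta^{\star}$ (equation \eqref{coboundary5}). No further algebraic identities about the Lie bracket, $R$, or $d$ are needed at this stage; they have all been absorbed into those two equalities.

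I would first treat the generic case $n\geq 2$. Pick an element $((f_n,g_{n-1}),(h_{n-1},s_{n-2}))\in\mathfrak{C}^n_{\mathrm{mRBLD^\lambda}}(A;V)$, apply the definition \eqref{mRBLD coboundary} to write
\begin{equation*}
\mathfrak{D}^n((f_n,g_{n-1}),(h_{n-1},s_{n-2}))=(X,Y),
\end{equation*}
with $X=\partial^n_{\mathrm{mRBLA^\lambda}}(f_n,g_{n-1})$ and $Y=\partial^{n-1}_{\mathrm{mRBLA^\lambda}}(h_{n-1},s_{n-2})+(-1)^n\Delta^n(f_n,g_{n-1})$. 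Applying $\mathfrak{D}^{n+1}$ then gives the pair $\bigl(\partial^{n+1}_{\mathrm{mRBLA^\lambda}}X,\ \partial^n_{\mathrm{mRBLA^\lambda}}Y+(-1)^{n+1}\Delta^{n+1}X\bigr)$. The first slot vanishes immediately by \eqref{mRBLA coboundary}. For the second slot I would expand, use \eqref{mRBLA coboundary} once more to kill the $\partial^n\partial^{n-1}$ summand, and then use \eqref{coboundary5} to rewrite $\partial^n\circ\Delta^n=\Delta^{n+1}\circ\partial^n$; the remaining expression is $\bigl((-1)^n+(-1)^{n+1}\bigr)\Delta^{n+1}(\partial^n_{\mathrm{mRBLA^\lambda}}(f_n,g_{n-1}))=0$.

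For the boundary case $n=1$ I would view $f_1\in C^1(A;V)$ as the element $(f_1,0)\in C^1_{\mathrm{mRBLA^\lambda}}(A;V)$ and likewise regard $-\Delta^1(f_1)$ as sitting inside $C^1_{\mathrm{mRBLA^\lambda}}(A;V)$ with zero second component; then $\mathfrak{D}^1(f_1)=(\partial^1_{\mathrm{mRBLA^\lambda}}(f_1),-\Delta^1(f_1))$ falls under the same computation, and the same two identities \eqref{mRBLA coboundary} and \eqref{coboundary5} produce the cancellation $-\Delta^2(\partial^1_{\mathrm{mRBLA^\lambda}}(f_1))+\Delta^2(\partial^1_{\mathrm{mRBLA^\lambda}}(f_1))=0$ in the second slot, while the first slot vanishes by $\partial^2\circ\partial^1=0$.

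I do not foresee a serious obstacle: the real work was already carried out in proving \eqref{mRBLA coboundary} and \eqref{coboundary5}. The only points requiring care are the sign bookkeeping, which is why the twist $(-1)^n$ was built into $\mathfrak{D}^n$ in the first place, and the consistent embedding of the low-degree spaces so that the $n=1$ case is formally subsumed by the general argument.
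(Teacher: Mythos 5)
Your proposal is correct and follows essentially the same route as the paper: expand $\mathfrak{D}^{n+1}\circ\mathfrak{D}^{n}$ using the definition, kill the first slot and the $\partial\circ\partial$ term via $\partial^{\star}_{\mathrm{mRBLA^\lambda}}\circ\partial^{\star}_{\mathrm{mRBLA^\lambda}}=0$, and cancel the remaining two terms via the commutation $\partial^{n}_{\mathrm{mRBLA^\lambda}}\circ\Delta^{n}=\Delta^{n+1}\circ\partial^{n}_{\mathrm{mRBLA^\lambda}}$ together with the opposite signs $(-1)^{n}$ and $(-1)^{n+1}$. Your separate, explicit treatment of the boundary case $n=1$ is a small extra care the paper leaves implicit, but the argument is otherwise identical.
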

	\begin{proof}
		For $\mathrm{n\geq1}$, using equations \eqref{mRBLD coboundary},  \eqref{coboundary5} and \eqref{mRBLA coboundary}
		
		\begin{align*}
			&\mathrm{\mathfrak{D}^{n+1}_{\mathrm{mRBLD^{\lambda}}}\circ\mathfrak{D}^n_{\mathrm{mRBLD^{\lambda}}}
				((f_n,g_{n-1}),(h_{n-1},s_{n-2}))}\\
			=&\mathrm{\mathfrak{D}^{n+1}_{\mathrm{mRBLD^{\lambda}}}(\partial^n_\mathrm{mRBLA^\lambda}(f_n,g_{n-1}),
				\partial^{n-1}_\mathrm{mRBLA^\lambda}(h_{n-1},s_{n-2})+(-1)^n\Delta^n(f_n,g_{n-1}))}\\
			=&\mathrm{\Big(\partial^{n+1}_\mathrm{mRBLA^\lambda}(\partial^n_\mathrm{mRBLA^\lambda}(f_n,g_{n-1})),
				\partial^n_\mathrm{mRBLA^\lambda}(\partial^{n-1}_\mathrm{mRBLA^\lambda}(h_{n-1},s_{n-2})
				+(-1)^n\Delta^n(f_n,g_{n-1})}\\
			&\mathrm{+(-1)^{n+1}\Delta^{n+1}(\partial^n_\mathrm{mRBLA^\lambda}(f_n,g_{n-1})\Big)}\\
			=&\mathrm{\Big((0,0),(-1)^n\partial^n_\mathrm{mRBLA^\lambda}(\Delta^n(f_n,g_{n-1}))
				+(-1)^{n+1}\Delta^{n+1}(\partial^n_\mathrm{mRBLA^\lambda}(f_n,g_{n-1})\Big)}\\
			&=0.
		\end{align*}
		This complete the proof.
	\end{proof}
	With respect to the representation $\mathrm{(\mathcal{V},\mathrm{R_V},\mathrm{d_V})}$ we obtain a complex 
	$\mathrm{\Big(\mathfrak{C}^{\star}_{\mathrm{mRBLD^{\lambda}}}
		(A,V),\mathfrak{D}^\star_{\mathrm{mRBLD^{\lambda}}}\Big)}$. Let $\mathrm{Z^n_{\mathrm{mRBLD^{\lambda}}}(A,V)}$ and
	$\mathrm{B^n_{\mathrm{mRBLD^{\lambda}}}(A,V)}$ denote the space of $\mathrm{n}$-cocycles and $\mathrm{n}$-coboundaries, respectively.
	Then we define the corresponding cohomology groups by
	\begin{equation*}
		\mathcal{H}^n_{\mathrm{mRBLD^{\lambda}}}(\mathrm{A,V}):=\frac{Z^n_{\mathrm{mRBLD^{\lambda}}}(\mathrm{A,V})}
		{B^n_{\mathrm{mRBLD^{\lambda}}}(\mathrm{A,V})},\quad \text{for } \mathrm{n\geq1}.
	\end{equation*}
	They are called \textbf{the cohomology of modified Rota-Baxter LieDer pair} $\mathrm{(\mathcal{A},R,d)}$ with coefficients
	in the representation $\mathrm{(\mathcal{V},R_V,d_{\mathrm{V}})}$.
	
	\section{Formal deformation of a modified Rota-Baxter LieDer pair}\label{sec4}
	\def\theequation{\arabic{section}.\arabic{equation}}
	\setcounter{equation} {0}
	In this section, we study a \textbf{one-parameter formal deformation} of modified Rota-Baxter LieDer pair. We use the notation
	$\mathrm{\mu}$ for the bilinear product $\mathrm{[-,-]}$ and the adjoint representation for modified Rota-Baxter LieDer
	pair.
	\begin{defi}
		Let $\mathrm{(A,\mu,R,d)}$ be a modified Rota-Baxter LieDer pair. A one-parameter formal deformation of
		$\mathrm{(A,\mu,R,d)}$ is a triple of power series $\mathrm{(\mu_\mathrm{t},R_\mathrm{t},d_\mathrm{t})},$
		
		\begin{eqnarray*}
			\mathrm{\mu_\mathrm{t}}&=&\mathrm{\sum _{i=0}^{\infty}\mu_it^i, \quad \mu_{i} \in C^2(A,A)},\\
			\mathrm{ R_\mathrm{t}}&=& \mathrm{\sum_{i=0}^{\infty} R_it^i,\quad R_i \in C^1_{mRBO^\lambda}(A,A)},\\
			\mathrm{d_\mathrm{t}}&=&\mathrm{\sum _{i=0}^{\infty}d_it^i,\quad d_i \in C^1_{mRBLA^\lambda}(A,A)}.
		\end{eqnarray*}
		such that $\mathrm{(A[\![t]\!],\mu_\mathrm{t},R_\mathrm{t},d_\mathrm{t})}$ is a modified Rota-Baxter LieDer pair, where
		$\mathrm{(\mu_0,R_{0},d_0)=(\mu,R,d)}.$
	\end{defi}
	Therefore, $\mathrm{(\mu_\mathrm{t},R_\mathrm{t},d_\mathrm{t})}$ will be a formal one-parameter deformation of a modified
	Rota-Baxter LieDer pair
	$\mathrm{(A,\mu,R,d)}$ if and only if the following conditions are satisfied for any $\mathrm{a,b,c \in A}$
	\begin{align*}
		\mathrm{\mu_\mathrm{t}(\mu_t(a,b),c)}&\mathrm{+\mu_\mathrm{t}(\mu_\mathrm{t}(b,c),a)+\mu_\mathrm{t}(\mu_\mathrm{t}(c,a),b)=0},\\
		\mathrm{\mu_\mathrm{t}(R_\mathrm{t}(a),R_\mathrm{t}(b))}&\mathrm{-R_\mathrm{t}(\mu_\mathrm{t}(a,R_\mathrm{t}(b))-\mu_\mathrm{t}(R_\mathrm{t}(a),b))- \lambda~ \mu_\mathrm{t} (a,b)=0},\\
		\mathrm{d_\mathrm{t}(\mu_\mathrm{t}(a,b))} &\mathrm{-\mu_\mathrm{t}(d_\mathrm{t}(a),b)-\mu_\mathrm{t}(a,d_\mathrm{t}(b))=0},\\
		\mathrm{R_\mathrm{t}\circ d_\mathrm{t}}&\mathrm{-d_\mathrm{t}\circ R_\mathrm{t}=0}.
	\end{align*}
	
	Expanding the above equations and equating the coefficients of $\mathrm{t^n}$($\mathrm{n}$ non-negative integer) from
	both sides, we get
	\begin{align*}
		\mathrm{\sum _{\substack{\mathrm{i}+\mathrm{j}=n \\\mathrm{i},\mathrm{j}\geq 0}}\mu_\mathrm{i}(\mu _\mathrm{j}(a,b),c)}
		&\mathrm{+\sum _{\substack{\mathrm{i}+\mathrm{j}=n \\\mathrm{i},\mathrm{j}\geq 0}} \mu_\mathrm{i}(\mu_\mathrm{j} (b,c),a)
			+\sum _{\substack{\mathrm{i}+\mathrm{j}=n \\\mathrm{i},\mathrm{j}\geq 0}} \mu_\mathrm{i}(\mu_\mathrm{j} (c,a),b)=0},\\
		\mathrm{\sum_{\substack{\mathrm{i}+\mathrm{j}+k=n \\ \mathrm{i},\mathrm{j},k \geq 0}}\mu_\mathrm{i}(R_\mathrm{j}(a),R_k(b))}
		&\mathrm{-\sum_{\substack{\mathrm{i}+\mathrm{j}+k=n \\ \mathrm{i},\mathrm{j},k \geq 0}}R_\mathrm{i}(\mu_\mathrm{j}
			(R_k(a),b))-\sum_{\substack{\mathrm{i}+\mathrm{j}+k=n \\ \mathrm{i},\mathrm{j},k \geq 0}}R_\mathrm{i}
			(\mu_\mathrm{j}(a,R_k(b)))- \lambda \mu_n(a,b)=0},\\
		\mathrm{\sum _{\substack{\mathrm{i}+\mathrm{j}=n \\\mathrm{i},\mathrm{j}\geq 0}}d_\mathrm{i}(\mu_\mathrm{j}(a,b))}
		&\mathrm{-\sum _{\substack{\mathrm{i}+\mathrm{j}=n \\\mathrm{i},\mathrm{j}\geq 0}}\mu_\mathrm{j}(d_\mathrm{i}(a),b)
			-\mu_\mathrm{j}(a,d_\mathrm{i}(b))=0},\\
		\mathrm{\sum _{\substack{\mathrm{i}+\mathrm{j}=n \\\mathrm{i},\mathrm{j}\geq 0}}R_\mathrm{i}\circ d_\mathrm{j}}
		&\mathrm{-\sum _{\substack{\mathrm{i}+\mathrm{j}=n \\\mathrm{i},\mathrm{j}\geq 0}}d_\mathrm{i}\circ R_\mathrm{j}=0}.
	\end{align*}
	Note that for $\mathrm{n=0}$, the above equations are precisely the Jacobi identity of $\mathrm{(A,\mu)}$, the condition
	for modified Rota-Baxter operator of weight $\mathrm{\lambda}$, the condition for the derivation $\mathrm{d}$ on
	$\mathrm{(A,\mu)}$ and the condition of compatibility of $\mathrm{R}$ and $\mathrm{d}$ respectively.
	\\
	Now, putting $\mathrm{n=1}$ in the above equations, we get
	\begin{equation}\label{deformation eq1}
		\mathrm{\mu_1 (\mu(a,b),c)+\mu (\mu_1(a,b),c)+\mu_1(\mu (b,c),a) +\mu (\mu_1(b,c),a)+\mu_1 (\mu (c,a),b)
			+\mu(\mu_1 (c,a),b)=0},
	\end{equation}
	
	\begin{equation}\label{deformation eq2}
		\begin{split}
			&\mathrm{\mu_1(R(a),R(b))+\mu (R_1(a),R(b))+\mu (R(a),R_1(b))-R_1(\mu (R(a),b))-R(\mu (R_1(a),b))} \\
			&\mathrm{-R(\mu _1 (R(a),b))-R_1(\mu (a,R(b)))-R(\mu _1 (a,R(b)))-R(\mu (a,R_1(b)))-\lambda ~\mu_1 (a,b)=0},
		\end{split}
	\end{equation}
	
	\begin{equation}\label{deformation eq3}
		\mathrm{d_1(\mu(a,b))+d(\mu_1(a,b))-\mu_1(d(a),b)-\mu(d_1(a),b)-\mu_1(a,d(b))-\mu(a,d_1(b))=0},
	\end{equation}
	and
	\begin{equation}\label{deformation eq4}
		\mathrm{R_1\circ d+R\circ d_1-d_1\circ R-d\circ R_1=0}.
	\end{equation}
	Where $\mathrm{a,b,c \in A}$.\\
	From the equation \eqref{deformation eq1}, we have
	\begin{equation}\label{deformation cocycle1}
		\mathrm{\delta_\mathrm{CE}^2(\mu_1)(a,b,c)=0},
	\end{equation}
	from the equation \eqref{deformation eq2}, we have
	\begin{equation}\label{deformation cocycle2}
		\mathrm{-\delta_\mathrm{mRBO^\lambda}^1\mathrm{(R_1)(a,b)}-\phi^2\mu_1(\mathrm{a,b})=0},
	\end{equation}
	from the equation \eqref{deformation eq3}, we have
	\begin{equation}\label{deformation cocycle3}
		\mathrm{\delta_\mathrm{CE}^1\mathrm{(d_1)(a,b)}+\Delta^2\mu_1(\mathrm{a,b})=0},
	\end{equation}
	and from the equation \eqref{deformation eq4}, we have
	\begin{equation}
		\mathrm{\Delta^1\mathrm{R_1(a)}-\phi^1\mathrm{d_1(a)}=0}.
	\end{equation}
	Therefore,
	$\mathrm{\Big((\delta_\mathrm{CE}^2(\mu_1),-\delta_\mathrm{mRBO^\lambda}^1\mathrm{(R_1)}-\phi^2\mu_1),
		(\delta_\mathrm{CE}^1\mathrm{(d_1)}+\Delta^2\mu_1,\Delta^1\mathrm{R_1}-\phi^1\mathrm{d_1})\Big)=((0,0),(0,0))}$.
	Hence,\\ $\mathrm{\mathfrak{D}^2_\mathrm{mRBLD^\lambda}(\mu_1,R_1,d_1)=0}$.\\
	This proves $\mathrm{(\mu_1,\mathrm{R_1,d_1})}$ is a $\mathrm{2}$-cocycle in the cochain complex
	$\mathrm{\Big(\mathfrak{C}^\star_{mRBLD^\lambda}(A,A),\mathfrak{D}^\star_\mathrm{mRBLD^\lambda}\Big)}.$
	Thus, from the above discussion, we have the following theorem.
	\begin{thm}\label{infy-co}
		Let $\mathrm{(\mu_\mathrm{t}, \mathrm{R_t},\mathrm{d_t})}$ be a one-parameter formal deformation of a modified
		Rota-Baxter LieDer pair $\mathrm{(A,\mu,\mathrm{R,d})}$. Then $\mathrm{(\mu\mathrm{_1,R_1,d_1})}$ is a
		$\mathrm{2}$-cocycle in the cochain complex $\mathrm{\Big(\mathfrak{C}^\star_{mRBLD^\lambda}(A,A),
			\mathfrak{D}^\star_\mathrm{mRBLD^\lambda}\Big)}.$
	\end{thm}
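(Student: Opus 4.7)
The plan is to extract the coefficient of $t^1$ from each of the four identities defining a formal deformation and identify each resulting linear equation with one component of the coboundary operator $\mathfrak{D}^2_{\mathrm{mRBLD}^\lambda}$. Essentially this is a bookkeeping argument: the deformation conditions are polynomial equations in $t$, and since $(\mu_0,R_0,d_0)=(\mu,R,d)$ already satisfies the axioms, the order $t^0$ terms vanish automatically, while setting the order $t^1$ coefficient to zero is exactly what the $2$-cocycle condition records.

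First I would write out the four identities $\mu_t(\mu_t(a,b),c)+\mathrm{cyc.}=0$, $\mu_t(R_ta,R_tb)-R_t(\mu_t(R_ta,b)+\mu_t(a,R_tb))-\lambda\mu_t(a,b)=0$, $d_t\mu_t(a,b)-\mu_t(d_ta,b)-\mu_t(a,d_tb)=0$, and $R_t\circ d_t-d_t\circ R_t=0$, then collect the coefficient of $t$. This produces the four linear identities \eqref{deformation eq1}--\eqref{deformation eq4} already displayed in the text, each coupling $\mu_1$, $R_1$ and $d_1$.

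Next, I would translate these four identities into coboundary statements. The linearized Jacobi identity is visibly $\delta_{\mathrm{CE}}^2(\mu_1)=0$, and the linearized derivation axiom for $d_t$ is $\delta_{\mathrm{CE}}^1(d_1)+\Delta^2(\mu_1)=0$, both by the standard Chevalley--Eilenberg formula and the definition \eqref{coboundary4} of $\Delta^\star$. The linearized compatibility $R_t\circ d_t=d_t\circ R_t$ gives $\Delta^1(R_1)-\phi^1(d_1)=0$ after using $\phi^1=\mathrm{Id}$ on the relevant components. For the linearized modified Rota-Baxter condition one must match the expression $\mu_1(Ra,Rb)+\mu(R_1a,Rb)+\mu(Ra,R_1b)-R_1(\mu(Ra,b)+\mu(a,Rb))-R(\mu_1(Ra,b)+\mu(R_1a,b)+\mu_1(a,Rb)+\mu(a,R_1b))-\lambda\mu_1(a,b)$ against $-\delta_{\mathrm{mRBO}^\lambda}^1(R_1)-\phi^2(\mu_1)$; one groups the terms involving $R_1$ into $\delta_{\mathrm{mRBO}^\lambda}^1(R_1)$ via the formula of Section~\ref{sec3} and the remaining terms into $\phi^2(\mu_1)$.

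Finally I would assemble these four identities into the single equation $\mathfrak{D}^2_{\mathrm{mRBLD}^\lambda}(\mu_1,R_1,d_1)=((0,0),(0,0))$ via the definition \eqref{mRBLD coboundary}, recalling that in degree two the coboundary splits into the pair $(\partial^2_{\mathrm{mRBLA}^\lambda}(\mu_1,R_1),\,\Delta^2(\mu_1,R_1)-\partial^1_{\mathrm{mRBLA}^\lambda}(d_1,0))$ up to the sign conventions. The main potential obstacle is the matching in the second step: the map $\phi^n$ is defined as an alternating sum indexed by odd and even subsets with powers of $-\lambda$, so one must verify that in degree two precisely the terms produced by linearising the weight-$\lambda$ modified Rota-Baxter identity line up with $\phi^2(\mu_1)$. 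This verification is a direct unpacking of the degree-two case of \eqref{equation morphism and coboboundary} and involves no substantive new computation beyond what was used in the proof of that lemma.
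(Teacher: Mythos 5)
Your proposal follows essentially the same route as the paper: expand the four deformation identities in powers of $t$, extract the coefficient of $t^{1}$ to obtain \eqref{deformation eq1}--\eqref{deformation eq4}, and identify these with the components $\delta^2_{\mathrm{CE}}(\mu_1)$, $-\delta^1_{\mathrm{mRBO^\lambda}}(\mathrm{R_1})-\phi^2(\mu_1)$, $\delta^1_{\mathrm{CE}}(\mathrm{d_1})+\Delta^2(\mu_1)$ and $\Delta^1(\mathrm{R_1})-\phi^1(\mathrm{d_1})$ of $\mathfrak{D}^2_{\mathrm{mRBLD^\lambda}}$. The only inaccuracies are cosmetic: $\phi^1$ is not the identity but $\phi^1(\mathrm{f_1})(\mathrm{a})=\mathrm{f_1(Ra)-R_V(f_1(a))}$ (which is exactly what makes the fourth identity match), and the sign in \eqref{mRBLD coboundary} in degree two is $+\Delta^2$, not $-\Delta^2$.
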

	\begin{defi}
		The $\mathrm{2}$-cocycle $\mathrm{(\mu\mathrm{_1,R_1,d_1})}$ is called \textbf{the infinitesimal of the formal one-parameter
		deformation} $(\mathrm{\mu\mathrm{_t,R_t,d_t})}$ of the modified Rota-Baxter LieDer pair $\mathrm{(A,\mu,\mathrm{R,d})}$
		of weight $\mathrm{\lambda}$.
	\end{defi}
	\begin{defi}
		Let $\mathrm{(\mu_\mathrm{t}, \mathrm{R_t},\mathrm{d_t})}$ and $\mathrm{(\mu_\mathrm{t}^\prime,
			\mathrm{R_t}^\prime,\mathrm{d_t}^\prime)}$ be two formal one-parameter deformations of a modified Rota-Baxter LieDer pair
		$\mathrm{(A,\mu,\mathrm{R,d})}$. A formal isomorphism between these two deformations is a power series
		$\mathrm{\psi_\mathrm{t}=\sum _{\mathrm{i=0}}^{\infty}\psi_\mathrm{i} \mathrm{t^i}:
			A[\![\mathrm{t}]\!] \rightarrow A[\![\mathrm{t}]\!]}$, where $\mathrm{\psi_\mathrm{i}: A \rightarrow A}$ are linear maps
		and  $\mathrm{\psi_0=\mathrm{Id}_A}$ such that the following conditions are satisfied
		\begin{eqnarray}
			\mathrm{\psi_\mathrm{t} \circ \mu^\prime_\mathrm{t}}&=&\mathrm{\mu_\mathrm{t} \circ (\psi_\mathrm{t} \otimes \psi_\mathrm{t})},\\
			\mathrm{\psi_\mathrm{t} \circ R_\mathrm{t}^\prime}&=&\mathrm{R_{\mathrm{t}} \circ \psi_\mathrm{t}},\\
			\mathrm{\psi_\mathrm{t} \circ d_\mathrm{t}^\prime}&=&\mathrm{d_{\mathrm{t}} \circ \psi_\mathrm{t}}.
		\end{eqnarray}
	\end{defi}
	Now expanding previous three equations and equating the coefficients of $\mathrm{t^n}$ from both the sides we get
	\begin{eqnarray*}
		\mathrm{\sum_{\substack {\mathrm{i+j}=n \\ \mathrm{i,j}\geq 0}}\psi _\mathrm{i}(\mu_\mathrm{j}^\prime(\mathrm{a,b}))}&
		=&\mathrm{ \sum_{\substack {\mathrm{i+j+k}=n \\ \mathrm{i,j,k}\geq 0}}\mu_\mathrm{i}(\psi_\mathrm{j}
			\mathrm{(a),\psi_k(b)}),~~ \mathrm{a,b} \in A}.\\
		\mathrm{\sum_{\substack {\mathrm{i+j}=n \\ \mathrm{i,j}\geq 0}}\psi_\mathrm{i} \circ R^\prime_\mathrm{j}}&
		=&\mathrm{\sum_{\substack {\mathrm{i+j}=n \\ \mathrm{i,j}\geq 0}} R_\mathrm{i} \circ \psi_\mathrm{j}},\\
		\mathrm{\sum_{\substack {\mathrm{i+j}=n \\ \mathrm{i,j}\geq 0}}\psi_\mathrm{i} \circ d^\prime_\mathrm{j}}&
		=&\mathrm{ \sum_{\substack {\mathrm{i+j}=n \\ \mathrm{i,j}\geq 0}} d_\mathrm{i} \circ \psi_\mathrm{j}}.
	\end{eqnarray*}
	Now putting $\mathrm{n=1}$ in the above equation, we get\\
	\begin{eqnarray*}
		\mathrm{\mu^\prime_1(a,b)}&=&\mathrm{\mu_1(a,b)+\mu (\psi_1(a),b)+\mu (a,\psi_1 (b))-\psi_1(\mu (a,b)) ,~~ a,b \in A},\\
		\mathrm{R_1^\prime}&=&\mathrm{R_1+R \circ \psi_1-\psi _1 \circ R},\\
		\mathrm{d_1^\prime}&=&\mathrm{d_1+d \circ \psi_1-\psi _1 \circ d}.
	\end{eqnarray*}
	Therefore, we have
	\[\mathrm{(\mu_1^\prime,R_1^\prime,d_1^\prime)-(\mu_1,R_1,d_1)
		=(\delta_{\mathrm{CE}}^1(\psi_1),-\phi^1(\psi_1),-\Delta^1(\psi_1))=\mathfrak{D}^1_\mathrm{mRBLD^\lambda}(\psi_1)
		\in \mathfrak{C}^{1}_\mathrm{mRBLD^\lambda}(A,A)}.\]
	Hence, from the above discussion, we have the following theorem.
	\begin{thm}
		The infinitesimals of two equivalent one-parameter formal deformation of a modified Rota-Baxter LieDer pair
		$\mathrm{(A,\mu,R,d)}$ are in the same cohomology class.
	\end{thm}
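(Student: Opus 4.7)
The proof should proceed by direct unpacking of the isomorphism conditions at first order in $t$, and then recognizing the resulting expressions as the image of the degree-$1$ coboundary map $\mathfrak{D}^1_{\mathrm{mRBLD^\lambda}}$ applied to $\psi_1$. The preceding discussion in the excerpt has already gathered all the necessary ingredients, so the task reduces to a clean identification.

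Concretely, my plan is as follows. Let $\psi_t = \sum_{i \geq 0} \psi_i t^i$ with $\psi_0 = \mathrm{Id}_A$ be a formal isomorphism between the two deformations $(\mu_t, R_t, d_t)$ and $(\mu_t^\prime, R_t^\prime, d_t^\prime)$. I would expand each of the three defining equations $\psi_t \circ \mu_t^\prime = \mu_t \circ (\psi_t \otimes \psi_t)$, $\psi_t \circ R_t^\prime = R_t \circ \psi_t$, and $\psi_t \circ d_t^\prime = d_t \circ \psi_t$ as formal power series in $t$. Extracting the coefficient of $t^1$ and using $\mu_0 = \mu$, $R_0 = R$, $d_0 = d$ together with $\psi_0 = \mathrm{Id}_A$ (which is what makes all the zeroth-order terms cancel) yields precisely the three formulas already displayed in the excerpt:
\begin{align*}
\mu_1^\prime(a,b) - \mu_1(a,b) &= \mu(\psi_1(a),b) + \mu(a,\psi_1(b)) - \psi_1(\mu(a,b)), \\
R_1^\prime - R_1 &= R \circ \psi_1 - \psi_1 \circ R, \\
d_1^\prime - d_1 &= d \circ \psi_1 - \psi_1 \circ d.
\end{align*}

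Next, I would identify each right-hand side as a coboundary contribution. By the definition of the Chevalley--Eilenberg differential, the first line is exactly $\delta_{\mathrm{CE}}^1(\psi_1)(a,b)$. By the definition of $\phi^1$ and $\delta^0_{\mathrm{mRBO^\lambda}}$ (keeping track of the sign appearing in the formula \eqref{coboundary of mRBLA} for $\partial^1_{\mathrm{mRBLA^\lambda}}$), the second line matches the $R$-component produced by $\mathfrak{D}^1_{\mathrm{mRBLD^\lambda}}(\psi_1)$, and by the definition of $\Delta^1$ the third line matches the $d$-component. Assembling these into the pair format of $\mathfrak{C}^2_{\mathrm{mRBLD^\lambda}}(A,A)$, one obtains
\[
(\mu_1^\prime, R_1^\prime, d_1^\prime) - (\mu_1, R_1, d_1) \;=\; \mathfrak{D}^1_{\mathrm{mRBLD^\lambda}}(\psi_1),
\]
which is exactly a $2$-coboundary. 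Since both infinitesimals are $2$-cocycles by Theorem \ref{infy-co}, they represent the same class in $\mathcal{H}^2_{\mathrm{mRBLD^\lambda}}(A,A)$, completing the proof.

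The only real obstacle is bookkeeping of signs: one must carefully match the $\pm$ signs appearing in the definitions of $\partial^1_{\mathrm{mRBLA^\lambda}}$ and $\mathfrak{D}^1_{\mathrm{mRBLD^\lambda}}$ (recall $\mathfrak{D}^1(\psi_1) = (\partial^1_{\mathrm{mRBLA^\lambda}}(\psi_1), -\Delta^1(\psi_1)) = (\delta^1_{\mathrm{CE}}(\psi_1), -\phi^1(\psi_1), -\Delta^1(\psi_1))$) against the signs coming out of the $t^1$-expansion. Once these are aligned, no further computation is required.
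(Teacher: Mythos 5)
Your proposal is correct and coincides with the paper's own argument: the paper likewise expands the three isomorphism identities, extracts the coefficient of $t$, and identifies $(\mu_1^\prime,R_1^\prime,d_1^\prime)-(\mu_1,R_1,d_1)=(\delta^1_{\mathrm{CE}}(\psi_1),-\phi^1(\psi_1),-\Delta^1(\psi_1))=\mathfrak{D}^1_{\mathrm{mRBLD^\lambda}}(\psi_1)$, so the two infinitesimals differ by a $2$-coboundary. Your sign bookkeeping (including $-\phi^1(\psi_1)=R\circ\psi_1-\psi_1\circ R$ under the adjoint representation) checks out against the paper's definitions.
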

	\begin{defi}
		A modified Rota-Baxter LieDer pair $\mathrm{(A,\mu,R,d)}$ is called \textbf{rigid} if every formal one-parameter deformation
		is trivial.
	\end{defi}
	\begin{thm}
		Let $\mathrm{(A, \mu,R,d)}$ be a modified Rota-Baxter LieDer pair Then $\mathrm{(A,\mu,R,d)}$ is rigid if
		$\mathrm{\mathcal{H}^2_\mathrm{mRBLD^\lambda}(A,A)=0}.$
	\end{thm}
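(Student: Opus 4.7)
The plan is to show that whenever the infinitesimal $(\mu_1,R_1,d_1)$ of a formal deformation is a coboundary, we can kill it by a suitable formal isomorphism, and then to iterate. By the previous theorem, $(\mu_1,R_1,d_1)$ is a $2$-cocycle in $\bigl(\mathfrak{C}^\star_{\mathrm{mRBLD}^\lambda}(A,A),\mathfrak{D}^\star_{\mathrm{mRBLD}^\lambda}\bigr)$. The hypothesis $\mathcal{H}^2_{\mathrm{mRBLD}^\lambda}(A,A)=0$ therefore produces an element $\psi_1\in\mathfrak{C}^1_{\mathrm{mRBLD}^\lambda}(A,A)=C^1(A,A)$ such that
\[
(\mu_1,R_1,d_1)=\mathfrak{D}^1_{\mathrm{mRBLD}^\lambda}(\psi_1)=\bigl(\delta^1_{\mathrm{CE}}(\psi_1),-\phi^1(\psi_1),-\Delta^1(\psi_1)\bigr).
\]

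Next, I would introduce the formal automorphism $\psi_t=\mathrm{Id}_A+\psi_1 t$ of $A[\![t]\!]$ and define a new deformation $(\mu'_t,R'_t,d'_t)$ by
\[
\mu'_t(a,b)=\psi_t^{-1}\bigl(\mu_t(\psi_t(a),\psi_t(b))\bigr),\qquad R'_t=\psi_t^{-1}\circ R_t\circ\psi_t,\qquad d'_t=\psi_t^{-1}\circ d_t\circ\psi_t.
\]
This is automatically a formal one-parameter deformation of $(A,\mu,R,d)$, equivalent to $(\mu_t,R_t,d_t)$ via $\psi_t$. Expanding and collecting the coefficient of $t$, the formulas for equivalent deformations computed just before the rigidity theorem give
\[
\mu'_1=\mu_1-\delta^1_{\mathrm{CE}}(\psi_1),\qquad R'_1=R_1+\phi^1(\psi_1),\qquad d'_1=d_1+\Delta^1(\psi_1),
\]
so the coboundary identity for $\psi_1$ yields $\mu'_1=0$, $R'_1=0$, $d'_1=0$. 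Hence $(\mu_t,R_t,d_t)$ is equivalent to a deformation whose first-order term vanishes, i.e.\ $\mu'_t=\mu+t^2\mu'_2+\cdots$, $R'_t=R+t^2 R'_2+\cdots$, $d'_t=d+t^2 d'_2+\cdots$.

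I would then iterate. The main technical step is to verify the inductive hypothesis: if a deformation has the form $\mu_t=\mu+t^n\mu_n+\cdots$, $R_t=R+t^n R_n+\cdots$, $d_t=d+t^n d_n+\cdots$ with $n\ge 1$, then the coefficient of $t^n$ in the defining identities of a modified Rota-Baxter LieDer pair collapses to exactly the equations \eqref{deformation eq1}--\eqref{deformation eq4} that were used to derive the cocycle condition, because all mixed terms with lower indices vanish. Consequently $(\mu_n,R_n,d_n)$ is again a $2$-cocycle, is a coboundary $\mathfrak{D}^1_{\mathrm{mRBLD}^\lambda}(\psi_n)$, and the formal isomorphism $\mathrm{Id}_A+\psi_n t^n$ produces an equivalent deformation whose first nontrivial term sits in degree $\ge n+1$. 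Taking the (well-defined) formal composite of all these isomorphisms in $A[\![t]\!]$ exhibits $(\mu_t,R_t,d_t)$ as equivalent to the trivial deformation $(\mu,R,d)$, proving rigidity. The only subtle point is checking that at each stage the order-$n$ obstruction really is the same cocycle expression as in the $n=1$ case; this is the part I would write out carefully, everything else being formal bookkeeping.
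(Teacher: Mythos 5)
Your proposal follows essentially the same route as the paper: use the vanishing of $\mathcal{H}^2_{\mathrm{mRBLD}^\lambda}(A,A)$ to write the infinitesimal as a coboundary, kill it with the formal isomorphism $\psi_t=\mathrm{Id}_A+\psi_1 t$, and iterate order by order (you are in fact more explicit about the induction step, which the paper dismisses with ``by repeating the arguments''). The only blemish is a sign: the conjugation formulas actually give $\mu_1'=\mu_1+\delta^1_{\mathrm{CE}}(\psi_1)$, $R_1'=R_1-\phi^1(\psi_1)$, $d_1'=d_1-\Delta^1(\psi_1)$, so one must take $(\mu_1,R_1,d_1)=-\mathfrak{D}^1_{\mathrm{mRBLD}^\lambda}(\psi_1)$ as in the paper (harmless, since $\psi_1$ can be replaced by $-\psi_1$).
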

	\begin{proof}
		Let $\mathrm{(\mu_t, R_t,d_t)}$ be a formal one-parameter deformation of the modified Rota-Baxter LieDer pair
		$\mathrm{(A,\mu,R,d)}$. From Theorem \ref{infy-co}, $\mathrm{(\mu_1,R_1,d_1)}$ is a $\mathrm{2}$-cocycle
		and as $\mathrm{\mathcal{H}^2_\mathrm{mRBLD^\lambda}(A,A)=0}$, thus, there exists a $1$-cochain $\psi_1\in\mathfrak{C}^1_\mathrm{mRBLD^\lambda}$ such that
		\begin{equation}\label{eqt deformation}
			(\mathrm{\mu_1,R_1,d_1})=-\mathfrak{D}^1_\mathrm{mRBLD^\lambda}(\psi_1).
		\end{equation}
		Then setting $\mathrm{\psi_t=Id + \psi_1t}$, we have a deformation $(\bar{\mu}_t,\bar{R}_t,\bar{d}_t))$, where
		\begin{eqnarray*}
			\mathrm{\bar{\mu}_t(a,b)}&=&\mathrm{\big(\psi_t^{-1} \circ \mu_t \circ (\psi_t \circ \psi_t)\big)(a,b)},\\
			\mathrm{\bar{R}_t(a)}&=&\mathrm{\big(\psi_t^{-1} \circ R_t \circ \psi_t\big)(a)},\\
			\mathrm{ \bar{d}_t(a)}&=&\mathrm{\big(\psi_t^{-1} \circ d_t \circ \psi_t\big)(a)}.
		\end{eqnarray*}
		Thus, $(\mathrm{\bar{\mu}_t,\bar{R}_t,\bar{d}_t})$ is equivalent to $(\mathrm{\mu_t,R_t,d_t})$.\\
		Moreover, we have
		\begin{eqnarray*}
			\mathrm{\bar{\mu}_t(a,b)}&=& \mathrm{Id - \psi_1t+\psi^2t^2+\cdots+(-1)^i\psi_1^it^i+\cdots) 
				(\mu_t(a+\psi_1(a)t,y+\psi_1(b)t)},  \\
			\mathrm{\bar{R}_t(a)}&=& \mathrm{Id - \psi_1t+\psi^2t^2+\cdots+(-1)^i{\psi_1}^{i}t^i+
				\cdots) (R_t(a+\psi_1(a)t))},	\\
			\mathrm{\bar{d}_t(a)}&=& \mathrm{Id - \psi_1t+\psi^2t^2+\cdots+(-1)^i{\psi_1}^{i}t^i+
				\cdots) (d_t(a+\psi_1(a)t))}.
		\end{eqnarray*}
		Then,
		\begin{eqnarray*}
			\bar{\mu}_\mathrm{t}\mathrm{(a,b)}&=&\mathrm{\mu(a,b)+\big(\mu_1(a,b)+\mu(a,\psi_1(b))+\mu(\psi_1(\psi_1(a),b)
				-\psi_1(\mathrm{\mu(a,b)}\big) +\bar{\mu}_2(a,b)t^2+\cdots},\\
			\mathrm{\bar{R}_t(a)}&=&\mathrm{R(a)+\big(R(\psi_1(a))+R_1(a)-\psi_1(R(a))\big)t+\bar{R}_2(a)t^2+\cdots,} \\
			\mathrm{\bar{d}_t(a)}&=&\mathrm{d(a)+\big(d(\psi_1(a))+d_1(a)-\psi_1(d(a))\big)t+\bar{d}_2(a)t^2+\cdots}.
		\end{eqnarray*}
		By \eqref{eqt deformation}, we have
		\begin{eqnarray*}
			\mathrm{\bar{\mu}_t(a,b)}&=& \mathrm{\mu(a,b) +\bar{\mu}_2(a,b)t^2+\cdots} ,  \\
			\mathrm{\bar{R}_t(a)}&=&\mathrm{ R(a) +\bar{R}_2(a)t^2+\cdots},\\
			\mathrm{\bar{d}_t(a)}&=& \mathrm{d +\bar{d}_2(a)t^2+\cdots} .
		\end{eqnarray*}
		
		Finaly, by repeating
		the arguments, we can
		show that $\mathrm{(\mu_t,R_t, d_t)}$ is equivalent to the trivial deformation. Hence, $\mathrm{(A,\mu,R,d)}$ is rigid.
	\end{proof}
	
	\section{Abelian extension of a modified Rota-Baxter LieDer pair}\label{sec5}
\def\theequation{\arabic{section}.\arabic{equation}}
\setcounter{equation} {0}
In this section, we study abelian extensions of modified Rota-Baxter LieDer pair and show
that they are classified by the second cohomology, as one would expect of a good cohomology theory. \\

Let $\mathrm{V}$ be any vector space. We can always define a bilinear product on $\mathrm{V}$ by $\mathrm{[u,v]_V=0}$, i.e., $\mathrm{\mu_V (u,v)=0}$ for
all $\mathrm{u,v \in V}$. If $\mathrm{R_V}$ and $\mathrm{d_V}$ be two linear maps on $\mathrm{V}$, then $\mathrm{(V,\mu_V,{R_V},d_V)}$ is a modified Rota-Baxter LieDer pairs of
weight $\mathrm{\lambda}$. Now we introduce the definition of the abelian extension of the modified Rota-Baxter LieDer pair. In the sequel we denote by $\mathrm{\mathcal{V}=(V,\mu_V)=(V,[-,-]_V)}$.
\begin{defi}
	Let $\mathrm{(A,[-,-],R,d)}$ be a modified Rota-Baxter LieDer pair and $\mathrm{V}$ be a vector space. Now a modified
	Rota-Baxter LieDer pair $\mathrm{(\hat{A},[-,-]_{\wedge},{\hat{R}},\hat{d})}$ is called an extension of
	$\mathrm{(A,[-,-],R,d)}$ by $\mathrm{(V,[-,-]_\mathrm{V},R_V,d_V)}$ if there exists a short exact sequence 
	of morphisms of modified Rota-Baxter LieDer pair
	$$\begin{CD}
		0@>>> \mathrm{(\mathcal{V},d_V)} @>\mathrm{i} >> \mathrm{(\hat{\mathcal{A}},\hat{d})} @>\mathrm{p} >> \mathrm{(\mathcal{A},d)} @>>>\mathrm{0}\\
		@. @V {\mathrm{R_V}} VV @V \hat{\mathrm{R}} VV @V \mathrm{R} VV @.\\
		0@>>> \mathrm{(\mathcal{V},d_V)} @>\mathrm{i} >> \mathrm{(\hat{\mathcal{A}},\hat{d})} @>\mathrm{p} >> \mathrm{(\mathcal{A},d)} @>>>0
	\end{CD}$$
	where $\mathrm{\mu_V (u,v)=0}$ for all $\mathrm{u,v \in V}$ and $\mathrm{\hat{\mathcal{A}}=(\hat{A},[-,-]_\wedge)}$.
\end{defi}
An extension   $\mathrm{(\hat{A},[-,-]_{\wedge},{\hat{R}},\hat{d})}$ of the modified Rota-Baxter LieDer pair
$\mathrm{(A,[-,-],R,d)}$ by $\mathrm{(V,[-,-]_\mathrm{V},R_V,d_V)}$ is called abelian if the Lie algebra $\mathcal{V}$ is abelian.\\
A section of an abelian extension $\mathrm{(\hat{A},[-,-]_{\wedge},{\hat{R}},\hat{d})}$ of the modified Rota-Baxter LieDer pair
$\mathrm{(A,[-,-],R,d)}$ by $\mathrm{(V,[-,-]_\mathrm{V},R_V,d_V)}$ consists of a linear map $\mathrm{s:A\rightarrow \hat{A}}$ such that $\mathrm{p\circ s=\mathrm{Id}}$. 
In the following, we always assume that $\mathrm{(\hat{\mathcal{A}},\hat{R},\hat{d})}$ is an abelian extension of the modified Rota-Baxter LieDer pair
$\mathrm{(A,[-,-],R,d)}$ by $\mathrm{(V,[-,-]_\mathrm{V},R_V,d_V)}$ and $\mathrm{s}$ is a section of it.
For all $\mathrm{a\in A}$, $\mathrm{u\in V}$ define a linear map $\mathrm{\rho:A\rightarrow \mathrm{gl(V)}}$ by

\begin{equation}\label{extension1}
	\mathrm{\rho(a)u:=[s(a),u]_\wedge}.
\end{equation}
\begin{pro}
	With the above notations, $\mathrm{(V,\rho,R_V,d_V)}$ is a representation of the modified Rota-Baxter LieDer 
	pair $\mathrm{(\mathcal{A},R,d)}$.
\end{pro}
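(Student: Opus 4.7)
The plan is to verify the four defining axioms of a representation of a modified Rota-Baxter LieDer pair (the Lie representation property, the modified Rota-Baxter compatibility \eqref{Rep of RBDer pair2}, the derivation compatibility \eqref{Rep of RBDer pair1}, and the commutativity $R_V d_V = d_V R_V$ from \eqref{Rep of RBDer pair3}) directly from the structure of the extension, exploiting systematically the fact that $\mathcal{V}$ is abelian and that $V \subset \hat{A}$ is the kernel of the projection $p$.

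First I would record three ``error terms'' that live in $V$: for any $a,b \in A$,
\begin{align*}
\alpha(a,b) &:= [s(a),s(b)]_\wedge - s([a,b]), \\
\beta(a) &:= \hat{R}(s(a)) - s(Ra), \\
\gamma(a) &:= \hat{d}(s(a)) - s(da),
\end{align*}
each of which lies in $V$ because $p$ is a morphism of modified Rota-Baxter LieDer pairs, so applying $p$ yields $0$ in all three cases. Since $[V,V]_\wedge = 0$, the bracket of any such error term with an element of $V$ vanishes, which is the single mechanism that makes every axiom fall out cleanly. In particular, the definition $\rho(a)u = [s(a),u]_\wedge$ is independent of the chosen section.

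Next, for the Lie representation property, I would compute $\rho([a,b])u = [s([a,b]),u]_\wedge = [[s(a),s(b)]_\wedge,u]_\wedge - [\alpha(a,b),u]_\wedge$, drop the $\alpha$-term using abelian-ness of $\mathcal{V}$, and apply the Jacobi identity in $\hat{\mathcal{A}}$ to obtain $\rho(a)\rho(b)u - \rho(b)\rho(a)u$. For the modified Rota-Baxter compatibility, I would apply the identity \eqref{modified RBO} in $\hat{\mathcal{A}}$ with arguments $s(a)$ and $u$: each occurrence of $\hat{R}(s(a))$ equals $s(Ra) + \beta(a)$, and each occurrence of $\hat{R}(u)$ equals $R_V(u)$; the $\beta(a)$ contributions are brackets against elements of $V$ and hence vanish, and one checks that the surviving terms assemble into $R_V(\rho(Ra)u + \rho(a)R_V u) + \lambda \rho(a)u$. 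The derivation condition is analogous: apply the Leibniz rule for $\hat{d}$ to $[s(a),u]_\wedge$, substitute $\hat{d}(s(a)) = s(da) + \gamma(a)$, and drop $[\gamma(a),u]_\wedge = 0$. Finally, the identity $R_V d_V = d_V R_V$ is simply the restriction of $\hat{R} \circ \hat{d} = \hat{d} \circ \hat{R}$ to $V \subset \hat{A}$, so it holds automatically.

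The main obstacle is really only notational: keeping clear what lives in $\hat{A}$ versus $V$ versus $A$, and consistently identifying $V$ with its image $i(V) \subset \hat{A}$. There is no genuine difficulty, since the abelian-ness of $\mathcal{V}$ kills every ``correction'' term in one step. One minor point worth verifying explicitly before the main computations is that $\rho$ does not depend on the choice of section $s$ (so that the statement is canonical); this again follows from the fact that any two sections differ by an element of $V$, and $[V,V]_\wedge = 0$.
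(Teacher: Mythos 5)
Your proposal is correct and follows essentially the same route as the paper: both arguments hinge on the observation that the correction terms $[s(a),s(b)]_\wedge - s([a,b])$, $\hat{R}(s(a)) - s(Ra)$, and $\hat{d}(s(a)) - s(da)$ lie in $V$ (because $p$ is a morphism) and therefore bracket trivially against $V$, after which the axioms for $\rho$, $R_V$, $d_V$ are read off from the corresponding identities in $\hat{\mathcal{A}}$. Your systematic naming of the error terms and the explicit remarks on section-independence and on $R_V d_V = d_V R_V$ being the restriction of $\hat{R}\hat{d}=\hat{d}\hat{R}$ are minor organizational improvements, not a different method.
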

\begin{proof}
	Let $\mathrm{u,v\in V}$ and $\mathrm{a\in A}$.
	\begin{align*}
		\mathrm{\rho([a,b])u}&\mathrm{=[s([a,b]),u]_\wedge}\\
		&\mathrm{=[[s(a),s(b)]_\wedge+s([a,b])-[s(a),s(b)]_\wedge,u]_\wedge}\\
		&\mathrm{=[[s(a),s(b)]_\wedge,u]_\wedge}\\
		&\mathrm{=[[s(a),u]_\wedge,s(b)]_\wedge+[s(a),[s(b),u]_\wedge]_\wedge}\\
		&\mathrm{=\rho(a)(\rho(b)u)-\rho(b)(\rho(a)u)}.
	\end{align*}
	By $\mathrm{p(s(Ra)-\hat{R}(s(a)))=Ra-R(p(s(a)))=0}$\\
	which implies
	\begin{equation*}
		\mathrm{s(Ra)-\hat{R}(s(a))\in V}.
	\end{equation*}
	Also we have
	\begin{align*}
		\mathrm{[\hat{R}(s(a)),R_V(y)]_\wedge}&\mathrm{=[\hat{R}(s(a)),\hat{R}(u)]_\wedge}\\
		&\mathrm{=\hat{R}\Big([\hat{R}(s(a)),u]_\wedge+[s(a),\hat{R}u]_\wedge\Big)+\lambda[s(a),u]_\wedge}\\
		&\mathrm{=\hat{R}\Big([\hat{R}(s(a))+s(Ra)-s(Ra),u]_\wedge+[s(a),\hat{R}u]_\wedge\Big)+\lambda[s(a),u]_\wedge}.
	\end{align*}
	And
	\begin{align*}
		\mathrm{[s(R(a)),R_V(u)]_\wedge}&\mathrm{=[\hat{R}(s(a))+s(Ra)-\hat{R}(s(a)),\hat{R}u]}\\
		&\mathrm{=[\hat{R}(s(a)),\hat{R}u]_\wedge}.
	\end{align*}
	Then we have
	\begin{align*}
		\mathrm{\rho(Ra)(R_Vu)}&\mathrm{=[s(Ra),R_Vu]_\wedge}\\
		&\mathrm{=[\hat{R}(s(a)),\hat{R}u]_\wedge}\\
		&\mathrm{=\hat{R}\Big([\hat{R}(s(a))+s(Ra)-s(Ra),u]_\wedge
			+[s(a),\hat{R}u]_\wedge\Big)+\lambda[s(a),u]_\wedge}\\
		&\mathrm{=\hat{R}\Big(\rho(Ra)u+\rho(a)\hat{R}u\Big)+\lambda\rho(a)u}\\
		&\mathrm{=R_V(\rho(Ra)u+\rho(a)R_Vu)+\lambda\rho(a)u}.
	\end{align*}
	Thanks to $\mathrm{s(da)-\hat{d}s(a)\in V}$ we have the following
	\begin{align*}
		&\mathrm{[s(da)-\hat{d}s(a),u]_\wedge=0}\\
		&\mathrm{[s(da),u]_\wedge-[\hat{d}s(a),u]_\wedge=0}\\
		&\mathrm{[s(da),u]_\wedge+[s(a),d_Vu]_\wedge-d_V([s(a),u])=0}\\
		&\mathrm{\rho(da)u+\rho(a)(d_Vu)-d_V(\rho(a)u)=0}.
	\end{align*}
	This complete the proof.
\end{proof}
For any $\mathrm{a,b\in A}$ and $\mathrm{u\in V}$, define
$\mathrm{\Theta:\wedge^2A\rightarrow V}$, $\mathrm{\chi:A\rightarrow V}$ and $\mathrm{\xi:A\rightarrow V}$ as follows
\begin{eqnarray*}
	\mathrm{\Theta(a,b)}&=&\mathrm{[s(a),s(b)]_\wedge-s([a,b])},\\
	\mathrm{\chi(a)}&=&\mathrm{\hat{d}(s(a))-s(d(a))},\\
	\mathrm{\xi(a)}&=&\mathrm{\hat{R}(s(a))-s(R(a)),\quad \forall a,b\in A}.
\end{eqnarray*}
These linear maps lead to define \\
$\mathrm{R_\xi:A\oplus V\rightarrow A\oplus V}$ and $\mathrm{d_\chi:A\oplus V\rightarrow A\oplus V}$ by
\begin{eqnarray*}
	\mathrm{R_\xi(a+u)}&=&\mathrm{R(a)+R_V(u)+\xi(a)},\\
	\mathrm{d_\chi(a)}&=&\mathrm{d(a)+d_V(u)+\chi(a)}.
\end{eqnarray*}
\begin{thm}\label{theorem ext}
	With the above notations, the quadruple $\mathrm{(A\oplus V,[-,-]_\Theta,R_\xi,d_\chi)}$ where
	\begin{equation*}
		\mathrm{[a+u,b+v]_\Theta=[a,b]+\Theta(a,b),\quad \forall a,b\in A,\quad \forall u,v\in V},
	\end{equation*}
	is a modified Rota-Baxter LieDer pair if and only if $\mathrm{(\Theta,\xi,\chi)}$ is a $\mathrm{2}$-cocycle of the modified Rota-Baxter LieDer pair $\mathrm{(\mathcal{A},R,d)}$ with coefficients in the trivial representation.
\end{thm}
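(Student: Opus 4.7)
The plan is to verify the four defining conditions of a modified Rota-Baxter LieDer pair for the quadruple $(A\oplus V, [-,-]_\Theta, R_\xi, d_\chi)$ one at a time, and to match each of them bijectively with one of the four component equations of $\mathfrak{D}^2_{\mathrm{mRBLD}^\lambda}((\Theta,\xi),(\chi,0))=0$. Since $\mathcal{V}$ is abelian and the representation is trivial (so $\rho=0$, $\rho_R=0$), the cross-terms in brackets like $[s(a),v]_\wedge$ vanish, and the bracket $[a+u,b+v]_\Theta = [a,b]+\Theta(a,b)$ has its $V$-component carried entirely by $\Theta$. Likewise $R_\xi$ and $d_\chi$ decompose as $R\oplus R_V$ and $d\oplus d_V$ corrected by the $V$-valued cochains $\xi$ and $\chi$. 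This decomposition guarantees that all ``$A$-parts'' of the four conditions reduce to the corresponding axioms already satisfied by $(\mathcal{A},R,d)$, so only the ``$V$-valued residual'' equations remain.

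First I would expand the Jacobi identity for $[-,-]_\Theta$ on three elements $a+u, b+v, c+w$. After cancelling the Jacobi identity of $\mathcal{A}$ on the $A$-side, the surviving condition in $V$ reads $\Theta([a,b],c)+\Theta([b,c],a)+\Theta([c,a],b)=0$, which is precisely $\delta^2_{\mathrm{CE}}(\Theta)=0$ with the trivial representation. Next I would handle the derivation condition $d_\chi([a+u,b+v]_\Theta)=[d_\chi(a+u),b+v]_\Theta+[a+u,d_\chi(b+v)]_\Theta$: after using $d(\mu)=\mu(d,\cdot)+\mu(\cdot,d)$ on $A$, the $V$-residual is $d_V(\Theta(a,b))-\Theta(da,b)-\Theta(a,db)+\chi([a,b])-[d(a),b]_\text{?}-\ldots$, which I will organise into the equation $\delta^1_{\mathrm{CE}}(\chi)+\Delta^2(\Theta)=0$. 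Finally the compatibility $R_\xi\circ d_\chi = d_\chi\circ R_\xi$ applied to $a+u$ cleanly reduces, after using $Rd=dR$ on $A$ and $R_V d_V = d_V R_V$ on $V$, to $\xi(da)+R_V(\chi(a))-\chi(Ra)-d_V(\xi(a))=0$, i.e.\ $-\phi^1(\chi)+\Delta^1(\xi)=0$.

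The main obstacle will be the modified Rota-Baxter identity $[R_\xi(a+u),R_\xi(b+v)]_\Theta = R_\xi([R_\xi(a+u),b+v]_\Theta + [a+u,R_\xi(b+v)]_\Theta)+\lambda[a+u,b+v]_\Theta$, because it mixes three $V$-valued cochains ($\Theta$, $\xi$, and implicitly $R_V$) and must be matched against the combined cocycle $-\delta^1_{\mathrm{mRBO}^\lambda}(\xi)-\phi^2(\Theta)=0$. My plan is to first compute the LHS, which gives $[Ra,Rb]+\Theta(Ra,Rb)$, and then expand the RHS by applying $R_\xi$ termwise: the $R$-component kills the $A$-side via the mRB identity of $R$ (yielding $[Ra,Rb]+\lambda[a,b]$ on the $A$-side), while the remaining $V$-components produce $R_V\bigl(\Theta(Ra,b)+\Theta(a,Rb)\bigr)+\xi([Ra,b]+[a,Rb])+\lambda\Theta(a,b)$. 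Equating the $V$-components of LHS and RHS and comparing with the explicit expansion of $-\delta^1_{\mathrm{mRBO}^\lambda}(\xi)(a,b)=\xi([a,b]_R)$ (using $\rho_R=0$) and $\phi^2(\Theta)(a,b)$ produces exactly the required cocycle equation.

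Having verified each of the four equivalences separately, the conclusion is immediate: all four conditions for $(A\oplus V, [-,-]_\Theta, R_\xi, d_\chi)$ hold simultaneously if and only if the four components of $\mathfrak{D}^2_{\mathrm{mRBLD}^\lambda}((\Theta,\xi),(\chi,0))$ vanish, i.e.\ $(\Theta,\xi,\chi)$ is a $2$-cocycle with coefficients in the trivial representation. Throughout, I will rely on Theorem \ref{theorem needed in the cohomology} only implicitly, since the trivial representation makes $\rho_R=0$ and so the induced-representation machinery becomes almost vacuous; the computation is essentially a bookkeeping exercise once the decompositions of $R_\xi$ and $d_\chi$ are in place.
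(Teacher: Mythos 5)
Your proposal is correct and follows essentially the same route as the paper: verify each of the four axioms (Jacobi, modified Rota--Baxter identity, derivation, commutation of $R_\xi$ and $d_\chi$) on $A\oplus V$, cancel the $A$-parts using the axioms of $(\mathcal{A},\mathrm{R},\mathrm{d})$, and identify the four $V$-valued residuals with the components $\delta^2_{\mathrm{CE}}(\Theta)$, $-\delta^1_{\mathrm{mRBO^\lambda}}(\xi)-\phi^2(\Theta)$, $\delta^1_{\mathrm{CE}}(\chi)+\Delta^2(\Theta)$, and $\Delta^1(\xi)-\phi^1(\chi)$ of $\mathfrak{D}^2_{\mathrm{mRBLD^\lambda}}$. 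Your observation that the trivial representation forces $\rho_{\mathrm{R}}=0$ and hence trivializes $\delta^1_{\mathrm{mRBO^\lambda}}$ to $-\xi([\cdot,\cdot]_{\mathrm{R}})$ matches the computation carried out in the paper.
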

\begin{proof}
	If $\mathrm{(A\oplus V,[-,-]_\Theta,R_\xi,d_\chi)}$ is a modified Rota-Baxter LieDer pair, it means that $\mathrm{(\mathcal{A},[-,-]_\Theta)}$ is a Lie algebra, $\mathrm{R_\xi}$ is a modified Rota-Baxter operator, $\mathrm{d_\chi}$ is a derivation and $\mathrm{R_\xi\circ d_\chi=d_\chi\circ R_\xi}$.\\
	The couple $\mathrm{(\mathcal{A},[-,-]_\Theta)}$ is a Lie algebra means that
	\begin{equation*}
		\mathrm{[[a+u,b+v]_\Theta,c+w]_\Theta+c.p=0}.
	\end{equation*}
	Which is exactly
	\begin{equation}
		\mathrm{\delta^2_\mathrm{CE}\Theta=0}.\label{ext1}
	\end{equation}
	And $\mathrm{R_\xi}$ is a modified Rota-Baxter operator on $\mathrm{(\mathcal{A},[-,-]_\Theta)}$ means that
	\begin{equation*}
		\mathrm{[R_\xi(a+u),R_\xi(b+v)]_\Theta=R_\xi\Big([R_\xi(a+u),b+v]_\Theta+[a+u,R_\xi(b+v)]_\Theta\Big)
			+\lambda [a+u,b+v]_\Theta}.
	\end{equation*}
	Since $\mathrm{(\mathcal{A},R)}$ is a modified Rota-Baxter Lie algebra and using equation 
	\eqref{Rep of RBDer pair3} we get
	\begin{align*}
		&\mathrm{[R_\xi(a+u),R_\xi(b+v)]_\Theta-R_\xi\Big([R_\xi(a+u),b+v]_\Theta+[a+u,R_\xi(b+v)]_\Theta\Big)-\lambda [a+u,b+v]_\Theta}\\
		=&\mathrm{[R(a)+R_V(u)+\xi(R(a)),R(b)+R_V(v)+\xi(R(b))]_\Theta-R_\xi([R(a)+R_V(u)+\xi(R(a)),b+v]_\Theta)}\\
		&-\mathrm{R_\xi([a+u,R(b)+R_V(v)+\xi(R(b))]_\Theta)-\lambda [a+u,b+v]_\Theta}\\
		=&\mathrm{[R(a),R(b)]+\Theta(R(a),R(b))-R_\xi([R(a),b]+\Theta(R(a),b))-R_\xi([a,R(b)]+\Theta(a,R(b)))-\lambda[a,b]-\lambda\Theta(a,b)}\\
		=&\mathrm{\Theta(R(a),R(b))-R_V\Theta(R(a),b)-R_V(a,R(b))-\lambda \Theta(a,b)-\xi([R(a),b]+[a,R(b)])}.
	\end{align*}
	Then $\mathrm{R_\xi}$ is a modified Rota-Baxter operator on $\mathrm{(\mathcal{A},[-,-]_\Theta)}$ if and only if
	\begin{equation*}
		\mathrm{\Theta(R(a),R(b))-R_V\Theta(R(a),b)-R_V(a,R(b))-\lambda \Theta(a,b)
			-\xi([R(a),b]+[a,R(b)])=0,\quad \forall a,b\in A}.
	\end{equation*}
	Which is exactly
	\begin{equation}\label{ext2}
		\mathrm{-\delta^1_\mathrm{mRBO^\lambda}(\xi)+\Delta^2(\Theta)=0}.
	\end{equation}
	And $\mathrm{d_\chi}$ is a derivation on the Lie algebra $\mathrm{(\mathcal{A},[-,-]_\Theta)}$ if and only if
	\begin{equation*}
		\mathrm{d_\chi[a+u,b+v]_\Theta=[d_\chi(a+u),b+v]_\Theta+[a+u,d_\chi(b+v)]_\Theta}.
	\end{equation*}
	Using the fact that $\mathrm{d}$ is a derivation on $\mathrm{\mathcal{A}}$ we get
	\begin{align*}
		&\mathrm{d_\chi([a+u,b+v]_\Theta)-[d_\chi(a+u),b+v]_\Theta-[a+u,d_\chi(b+v)]_\Theta}\\
		=&\mathrm{d_\chi([a+u,b+v]_\Theta)-[d(a)+d_V(u)+\chi(a),b+v]_\Theta-[a+u,d(b)+d_V(v)+\chi(b)]_\Theta}\\
		=&\mathrm{d_\chi([a,b]+\Theta(a,b))-[d(a),b]-\Theta(d(a),b)-[a,d(b)]-\Theta(a,d(b))}\\
		=&\mathrm{d[a,b]+d_V(\Theta(a,b))+\chi([a,b])-[d(a),b]-\Theta(d(a),b)-[a,d(b)]-\Theta(a,d(b))}\\
		=&\mathrm{d_V(\Theta(a,b))-\Theta(d(a),b)-\Theta(a,d(b))+\chi([a,b])}
	\end{align*}
	Then $\mathrm{d_\chi}$ is a derivation on the Lie algebra $\mathrm{(\mathcal{A},[-,-]_\Theta)}$ if and only if
	\begin{equation*}
		\mathrm{d_V(\Theta(a,b))-\Theta(d(a),b)-\Theta(a,d(b))+\chi([a,b])=0,\quad \forall a,b\in A}.
	\end{equation*}
	Which is exactly
	\begin{equation}
		\mathrm{\delta^1_\mathrm{CE}(\chi)+\Delta^2(\Theta)=0}.\label{ext3}
	\end{equation}
	Finally, using the equations \eqref{condition1 MRBLieDer pair} and \eqref{Rep of RBDer pair3} we get
	\begin{align*}
		&\mathrm{d_\chi\circ R_\xi(a+u)-d_\xi\circ d_\chi(a+u)}\\
		=&\mathrm{d_\chi(R(a)+R_V(u)\xi(a))-R_\xi(d(a)+d_V(u)+\chi(a))}\\
		=&\mathrm{d(R(a))+d_V(R_V(u)+\xi(a))+\chi(R(a))-R(d(a))-R_V(d_V(u)+\chi(a))-\xi(d(a))}\\
		=&\mathrm{d_V(\xi(a))-\xi(d(a))+\chi(R(a))-R_V(\chi(a))}
	\end{align*}
	then $\mathrm{R_\xi}$ and $\mathrm{d_\chi}$ commute if and only if
	\begin{equation*}
		\mathrm{d_V(\xi(a))-\xi(d(a))+\chi(R(a))-R_V(\chi(a))=0,\quad \forall a\in A}.
	\end{equation*}
	Which is exactly
	\begin{equation}\label{ext4}
		\mathrm{\Delta^1(\xi)-\phi^1(\chi)=0}.
	\end{equation}
	In conclusion, $\mathrm{(A\oplus V,[-,-]_\Theta,R_\xi,d_\chi)}$ is a modified Rota-Baxter LieDer pair if and 
	only if equations \eqref{ext1}, \eqref{ext2}, \eqref{ext3}, \eqref{ext4} hold.\\
	For the second sense, if $\mathrm{(\Theta,\xi,\chi)\in\mathfrak{C}^2_\mathrm{mRBLD^\lambda}(A;V)}$ is a 
	$\mathrm{2}$-cocycle if and only if
	\begin{equation*}
		\mathrm{\Big(\delta^2_\mathrm{CE}(\Theta),-\delta^1_\mathrm{mRBO^\lambda}(\xi)-\phi^2(\Theta),  
			\delta^1_\mathrm{CE}(\chi)+\Delta^2(\Theta),\Delta^1(\xi)-\phi^1(\chi)\Big)=0}.
	\end{equation*}
	This means that equations \eqref{ext1}, \eqref{ext2}, \eqref{ext3}, \eqref{ext4} are satisfied. \\
	Thus $\mathrm{\Big(\delta^2_\mathrm{CE}(\Theta),-\delta^1_\mathrm{mRBO^\lambda}
		(\xi)-\phi^2(\Theta),\delta^1_\mathrm{CE}(\chi)+\Delta^2(\Theta),\Delta^1(\xi)-\phi^1(\chi)\Big)=0}$ 
	if and only if $\mathrm{(A\oplus V,[-,-]_\Theta)}$ is a Lie algebra, $\mathrm{R_\xi}$ is a modified 
	Rota-Baxter operator of  $\mathrm{(A\oplus V,[-,-]_\Theta)}$, $\mathrm{d_\chi}$ is a derivation of 
	$\mathrm{(A\oplus V,[-,-]_\Theta)}$ and $\mathrm{d_\chi\circ R_\xi=R_\xi\circ d_\chi}$. This complete the proof.

\end{proof}
\begin{defi}
	Two abelian extensions $\mathrm{(\hat{A}_1,[-,-]_{\wedge_1},{\hat{R}_1},\hat{d}_1)}$ and 
	$\mathrm{(\hat{A}_2,[-,-]_{\wedge_2},{\hat{R}_2},\hat{d}_2)}$ of a modified Rota-Baxter LieDer pair 
	$\mathrm{(A,[-,-],R,d)}$ by $\mathrm{(V,[-,-]_\mathrm{V},R_V,d_V)}$ are called equivalent if there exists a 
	homomorphism of modified Rota-Baxter LieDer pairs 
	$\mathrm{\gamma: (\hat{A}_1,[-,-]_{\wedge_1},{\hat{R}_1},\hat{d}_1) \rightarrow  
		(\hat{A}_2,[-,-]_{\wedge_2},{\hat{R}_2},\hat{d}_2)}$ such that the following diagram commutes
	
	$$\begin{CD}
		0@>>> {\mathrm{(V,\mathrm{R}_V,\mathrm{d_V})}} @>\mathrm{i_1} >> \mathrm{(\hat{A}_1,\hat {\mathrm{R}}_1,\hat{d}_1)} @>\mathrm{p_1} >> \mathrm{(A,\mathrm{R},\mathrm{d})} @>>>\mathrm{0}\\
		@. @| @V \mathrm{\gamma} VV @| @.\\
		0@>>> {\mathrm{(V,\mathrm{R}_V,\mathrm{d_V})}} @>\mathrm{i_2} >> \mathrm{(\hat{A}_2,\hat {\mathrm{R}}_2,\hat{d}_2)} @>\mathrm{p_2} >> \mathrm{(A,\mathrm{R},\mathrm{d})} @>>>\mathrm{0}.
	\end{CD}$$
\end{defi}
\begin{thm}
	Abelian extensions of modified Rota-Baxter LieDer pair $\mathrm{(A,[-,-],R,d)}$ by $\mathrm{(V,[-,-]_\mathrm{V},R_V,d_V)}$
	are classified by the second cohomology $\mathrm{\mathcal{H}^2_\mathrm{mRBLD^\lambda}(A;V)}$ of the modified 
	Rota-Baxter LieDer pair $\mathrm{(A,[-,-],R,d)}$ with coefficients in the trivial representation.
\end{thm}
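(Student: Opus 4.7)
The plan is to set up the standard bijection between equivalence classes of abelian extensions and the second cohomology group, using Theorem \ref{theorem ext} as the main structural device. First, starting from an abelian extension $(\hat{\mathcal{A}},\hat{R},\hat{d})$ with a chosen section $s : A \to \hat{A}$, the linear maps $\Theta(a,b) = [s(a),s(b)]_\wedge - s([a,b])$, $\chi(a) = \hat{d}(s(a)) - s(d(a))$, and $\xi(a) = \hat{R}(s(a)) - s(R(a))$ all take values in $V$ (because $p$ is a morphism commuting with $[-,-]$, $d$, $R$). By Theorem \ref{theorem ext} applied on the vector space $A \oplus V$ with the identification $\hat{A} \cong A \oplus V$ via $s$, the resulting triple $(\Theta,\xi,\chi)$ is a $2$-cocycle in $\mathfrak{C}^2_{\mathrm{mRBLD^\lambda}}(A;V)$ since $(\hat{\mathcal{A}},\hat{R},\hat{d})$ really is a modified Rota-Baxter LieDer pair.

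Second, I would check that the cohomology class $[(\Theta,\xi,\chi)] \in \mathcal{H}^2_{\mathrm{mRBLD^\lambda}}(A;V)$ does not depend on the choice of section. Given another section $s' : A \to \hat{A}$, the difference $s' - s$ takes values in $V$ and defines a linear map $\psi \in \mathfrak{C}^1_{\mathrm{mRBLD^\lambda}}(A;V) = C^1(A;V)$. A direct computation using the definitions of $\Theta,\xi,\chi$ for both sections shows
\[
(\Theta',\xi',\chi') - (\Theta,\xi,\chi) = \bigl(\delta^1_{\mathrm{CE}}(\psi),\,-\phi^1(\psi),\,-\Delta^1(\psi)\bigr) = \mathfrak{D}^1_{\mathrm{mRBLD^\lambda}}(\psi),
\]
so the two cocycles are cohomologous. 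The analogous check for equivalent extensions $\gamma : \hat{A}_1 \to \hat{A}_2$ is essentially the same: $\gamma \circ s_1 - s_2$ lands in $V$ and plays the role of $\psi$, so equivalent extensions give the same cohomology class.

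Third, I would construct the inverse map. Given a $2$-cocycle $(\Theta,\xi,\chi)$, Theorem \ref{theorem ext} endows $A \oplus V$ with the bracket $[-,-]_\Theta$, the operator $R_\xi$, and the derivation $d_\chi$, producing an abelian extension of $(\mathcal{A},R,d)$ by $(\mathcal{V},R_V,d_V)$ via the obvious inclusion and projection. One then verifies that cohomologous cocycles produce equivalent extensions: if $(\Theta',\xi',\chi') - (\Theta,\xi,\chi) = \mathfrak{D}^1_{\mathrm{mRBLD^\lambda}}(\psi)$ for some $\psi : A \to V$, then the map $\gamma : A \oplus V \to A \oplus V$, $\gamma(a+u) = a + u + \psi(a)$, is a modified Rota-Baxter LieDer pair isomorphism intertwining the two structures, as follows from the same identity used in step two read backwards.

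The only real subtlety is bookkeeping in step two: one must carefully verify that the three differences $\Theta' - \Theta$, $\xi' - \xi$, $\chi' - \chi$ match exactly the three components of $\mathfrak{D}^1_{\mathrm{mRBLD^\lambda}}(\psi) = (\partial^1_{\mathrm{mRBLA^\lambda}}(\psi),-\Delta^1(\psi))$ expanded via \eqref{coboundary of mRBLA}, with the correct signs. Once these sign conventions are pinned down, the two assignments extension $\mapsto$ cohomology class and cocycle $\mapsto$ extension are mutually inverse bijections on equivalence classes, yielding the claimed classification.
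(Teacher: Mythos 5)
Your proposal is correct and follows essentially the same route as the paper: extract the $2$-cocycle $(\Theta,\xi,\chi)$ from a section via Theorem \ref{theorem ext}, show independence of the section and invariance under equivalence by exhibiting the difference as $\mathfrak{D}^1_{\mathrm{mRBLD^\lambda}}$ of the $V$-valued difference of sections, and invert the construction by building $(A\oplus V,[-,-]_\Theta,R_\xi,d_\chi)$ from a cocycle, with $\gamma(a+u)=a+\mathfrak{h}(a)+u$ witnessing equivalence of cohomologous cocycles. The only cosmetic difference is that the paper handles equivalent extensions by taking $s_2=\gamma\circ s_1$ as the section downstairs (getting literally equal cocycles), whereas you reduce to the change-of-section computation; both are fine.
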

\begin{proof}
	Let $\mathrm{(\hat{A},[-,-]_\wedge,\hat{R},\hat{d})}$ be an abelian extension of a modified Rota-Baxter LieDer 
	pair $\mathrm{(A,[-,-],R,d)}$ by $\mathrm{(V,[-,-]_\mathrm{V},R_V,d_V)}$. Let $\mathrm{s}$ be a section of it where 
	$\mathrm{s:A\rightarrow\hat{A}}$, we already have a $\mathrm{2}$-cocycle $\mathrm{(\Theta,\xi,\chi)\in\mathfrak{C}^2_\mathrm{mRBLD^\lambda}(A;V)}$ 
	by theorem \eqref{theorem ext}.\\
	First, we prove that the cohomological class of $\mathrm{(\Theta,\xi,\chi)}$ does not depend on the choice of 
	sections. Assume that $\mathrm{(s_1,s_2)}$ are two different sections providing $\mathrm{2}$-cocycles $\mathrm{(\Theta_1,\xi_1,\chi_1)}$ 
	and $\mathrm{(\Theta_2,\xi_2,\chi_2)}$ respectively. Define a linear map $\mathrm{\mathfrak{h}:A\rightarrow V}$ by $\mathrm{\mathfrak{h}(a)=s_1(a)-s_2(a),\quad \forall a\in A}$. 
	Then
	\begin{align*}
		\mathrm{\Theta_1(a,b)}&=\mathrm{[s_1(a),s_1(b)]_\wedge-s_1([a,b])}\\
		&\mathrm{=[\mathfrak{h}(a)+s_2(a),\mathfrak{h}(a)+s_2(a)]_\wedge-\mathfrak{h}([a,b])-s_2([a,b])}\\
		&\mathrm{=[s_2(a),s_2(b)]_\wedge-\mathfrak{h}([a,b])}\\
		&\mathrm{=\Theta_2(a,b)+\delta^1_\mathrm{CE}(\mathfrak{h})(a,b)}
	\end{align*}
	And
	\begin{align*}
		\mathrm{\xi_1(a)}&\mathrm{=\hat{R}(s_1(a))-s_1(R(a))}\\
		&\mathrm{=\hat{R}(\mathfrak{h}(a)+s_2(a))-\mathfrak{h}((R(a)))-s_2(R(a))}\\
		&\mathrm{=\xi_2(a)+R_V(\mathfrak{h}(a))-\mathfrak{h}(R(a))}\\
		&\mathrm{=\xi_2(a)-\phi^1\mathfrak{h}(a)}.
	\end{align*}
	Also
	\begin{align*}
		\mathrm{\chi_1(a)}&=\mathrm{\hat{d}(s_1(a))-s_1(d(a))}\\
		&\mathrm{=\hat{d}(\mathfrak{h}(a)+s_2(a))-\mathfrak{h}((d(a)))-s_2(d(a))}\\
		&\mathrm{=\chi_2(a)+d_V(\mathfrak{h}(a))-\mathfrak{h}(d(a))}\\
		&\mathrm{=\xi_2(a)-\Delta^1\mathfrak{h}(a)}.
	\end{align*}
	Which means that
	\begin{equation*}
		\mathrm{(\Theta_1,\xi_1,\chi_1)=(\Theta_2,\xi_2,\chi_2)+\mathfrak{D}^1_\mathrm{mRBLD^\lambda}(\mathfrak{h})}.
	\end{equation*}
	So $\mathrm{(\Theta_1,\xi_1,\chi_1)}$ and $\mathrm{(\Theta_2,\xi_2,\chi_2)}$ are in the same cohomological class.\\
	Next, we show that equivalent abelian extensions give rise to the same element in $\mathrm{\mathcal{H}^2_\mathrm{mRBLD^\lambda}(A;V)}$. Let $\mathrm{(\hat{A_1},[-,-]_{\wedge_1},\hat{R_1},\hat{d_1})}$ and $\mathrm{(\hat{A_2},[-,-]_{\wedge_2},\hat{R_2},\hat{d_2})}$ 
	be two equivalent abelian extensions of a modified Rota-Baxter LieDer pair $\mathrm{(A,[-,-],R,d)}$ by 
	$\mathrm{(V,[-,-]_\mathrm{V},R_V,d_V)}$ via the homomorphism $\mathrm{\gamma}$. Assume that $\mathrm{s_1}$ is a section of 
	$\mathrm{(\hat{A_1},[-,-]_{\wedge_1},\hat{R_1},\hat{d_1})}$ and $\mathrm{(\Theta_1,\xi_1,\chi_1)}$ is the corresponding 
	$\mathrm{2}$-cocycle. Since $\mathrm{\gamma}$ being a homomorphism of modified Rota-Baxter LieDer pairs such that 
	$\mathrm{\gamma_{\vert V}=\mathrm{Id_V}}$, we have then
	\begin{align*}
		\mathrm{\xi_2(a)}&\mathrm{=\hat{R_2}(a)-s_2(R(a))}\\
		&\mathrm{=\hat{R_2}(\gamma(s_1(a)))-\gamma(s_1(R(a)))}\\
		&\mathrm{=\gamma(\hat{R_1}(s_1(a))-s_1(R(a)))}\\
		&\mathrm{=\xi_1(a)}
	\end{align*}
	and
	\begin{align*}
		\mathrm{\chi_2(a)}&\mathrm{=\hat{d_2}(a)-s_2(d(a))}\\
		&\mathrm{=\hat{d_2}(\gamma(s_1(a)))-\gamma(s_1(d(a)))}\\
		&\mathrm{=\gamma(\hat{d_1}(s_1(a))-s_1(d(a)))}\\
		&\mathrm{=\chi_1(a)}
	\end{align*}
	similarly we obtain $\mathrm{\Theta_2(a,b)=\Theta_1(a,b)}$. Thus, equivalent abelian extension give rise to the 
	same element in $\mathrm{\mathcal{H}^2_\mathrm{mRBLD^\lambda}(A;V)}$.\\
	On the other hand, given two $\mathrm{2}$-cocycles $\mathrm{(\Theta_1,\xi_1,\chi_1)}$ and 
	$\mathrm{(\Theta_2,\xi_2,\chi_2)}$, we have two abelian extensions 
	$\mathrm{(A\oplus V,[-,-]_{\Theta_1},R_{\xi_1},d_{\chi_1})}$ and 
	$\mathrm{(A\oplus V,[-,-]_{\Theta_2},R_{\xi_2},d_{\chi_2})}$ by theorem \eqref{theorem ext}. 
	Suppose that they belong to the same cohomological class in $\mathrm{\mathcal{H}^2_\mathrm{mRBLD^\lambda}(A;V)}$, 
	then the existence of a linear map $\mathrm{\mathfrak{h}:A\rightarrow V}$ such that
	\begin{equation*}
		\mathrm{(\Theta_1,\xi_1,\chi_1)=(\Theta_2,\xi_2,\chi_2)+\mathfrak{D}^1_\mathrm{mRBLD^\lambda}(\mathfrak{h})}.
	\end{equation*}
	Define $\mathrm{\gamma:A\oplus V\rightarrow A\oplus V}$ by $\mathrm{\gamma(a+u)=a+\mathfrak{h}(a)+u}$, for all $\mathrm{a\in A}$ and $\mathrm{u\in V}$.
	\begin{align*}
		\mathrm{\gamma([a+u,b+v]_{\Theta_1})-[\gamma(a+u),\gamma(b+v)]_{\Theta_2}}
		&\mathrm{=\gamma([a,b]+\Theta_1(a,b))-[a+\mathfrak{h}(a)+u,b+\mathfrak{h}(b)+v]_{\Theta_2}}\\
		&\mathrm{=\gamma([a,b]+\Theta_1(a,b))-[a,b]-\Theta_2(a,b)}\\
		&\mathrm{=[a,b]+\mathfrak{h}([a,b])+\Theta_1(a,b)-[a,b]-\Theta_2(a,b)}\\
		&\mathrm{=\Theta_1(a,b)-\Theta_2(a,b)-\delta^1_\mathrm{CE}(\mathfrak{h})(a,b)}\\
		&\mathrm{=0}
	\end{align*}
	similarly we have $\mathrm{\gamma\circ R_{\xi_1}=R_{\xi_2}\circ\gamma}$ and 
	$\mathrm{\gamma\circ d_{\chi_1}=d_{\chi_2}\circ\gamma}$. Thus $\mathrm{\gamma}$ is a 
	homomorphism of these two abelian extensions, this complete the proof.
\end{proof}

	\noindent {\bf Acknowledgment:}
	The authors would like to thank the referee for valuable comments and suggestions on this article.
	
	
\end{document}